\newtheorem{theorem}{Theorem}
\newtheorem{corollary}[theorem]{Corollary}
\newtheorem*{theorem*}{Theorem}
\newtheorem{lemma}[theorem]{Lemma}
\newtheorem*{lemma*}{Lemma}
\newtheorem{proposition}[theorem]{Proposition}
\newtheorem*{proposition*}{Proposition}
\theoremstyle{definition}
\theoremstyle{remark}
\newtheorem*{remark}{Remark}
\newtheorem*{rem}{Remark}
\newcommand{\good}{\operatorname{good}}
\newcommand{\LCM}{\operatorname{LCM}}
\newcommand{\GCD}{\operatorname{GCD}}
\newcommand{\sqf}{\operatorname{sqf}}
\newcommand{\diag}{\operatorname{diag}}
\newcommand{\op}{\operatorname{op}}
\newcommand{\fix}{\operatorname{fix}}
\newcommand{\Sym}{\operatorname{Sym}}
\newcommand{\E}{\mathbf{E}}     
\newcommand{\Prob}{\mathbf{P}} 
\newcommand{\bR}{\mathbb{R}}      
\newcommand{\zed}{\mathbb{Z}}
\newcommand{\bN}{\mathbb{N}}
\newcommand{\ba}{\mathbf{a}}
\newcommand{\cB}{\mathscr{B}}
\newcommand{\cN}{\mathscr{N}}	
\newcommand{\cK}{\mathscr{K}}    
\newcommand{\cP}{\mathscr{P}}		 
\newcommand{\cM}{\mathscr{M}}
\newcommand{\cC}{\mathscr{C}}
\newcommand{\cD}{\mathscr{D}}
\newcommand{\cOD}{\mathscr{OD}}
\newcommand{\cS}{\mathscr{S}}
\newcommand{\cX}{\mathscr{X}}
\newcommand{\vv}{{\mathbf{v}}}
\newcommand{\vtheta}{{\boldsymbol{\theta}}}
\newcommand{\ux}{\underline{x}}
\newcommand{\uG}{\underline{G}}
\newcommand{\uO}{\underline{0}}
\newcommand{\ueps}{\underline{\epsilon}}
\newcommand{\umu}{\underline{\mu}}
\newcommand{\upi}{\underline{\pi}}
\newcommand{\utau}{\underline{\tau}}
 \newcommand{\constPiGood}{0.3}    
\newcommand{\constRhoTwo}{0.9}
\newcommand{\constRhoOp}{0.1}
 \newcommand{\constEBTwo}{0.246514091}
\newcommand{\constEBop}{0.002220166}
\newcommand{\constM}{1.769746269}
\newcommand{\constMBTwoTwo}{0.391292208}
\newcommand{\constMBTwo}{0.625533539}
\newcommand{\constFOa}{1.769746269}
\newcommand{\constFOb}{1.900670975}
\newcommand{\constFOc}{2.033321919}
\newcommand{\constFOd}{2.184489901}
\newcommand{\constFOe}{2.363269323}
\newcommand{\constFOf}{2.530235874}
\newcommand{\constFOg}{2.686345986}
\newcommand{\constFOh}{2.833661687}
\newcommand{\constFOi}{2.973253326}
\newcommand{\constFOj}{3.106051540}
\newcommand{\constbetaTwonew}{94.66051416}
\newcommand{\constbetaThreenew}{199.2834489}
\newcommand{\constSplice}{100}
\newcommand{\constEpsMax}{ 0.292129153}
\newcommand{\constOmegaThreshold}{5}
\newcommand{\constbetaTwoNewratio}{0.5197033883}
\newcommand{\constbetaThreeNewratio}{0.3100980448}
\newcommand{\constMIter}{2.949873427}
\newcommand{\constcCTwo}{0.0001571422884}
\newcommand{\constcSTwo}{3.212501212}
\newcommand{\constrhoThreeIter}{0.7}
\newcommand{\constEBThreeThreeIter}{0.1023637064}
\newcommand{\constrhoTwoIter}{0.2}
\newcommand{\constEBTwoTwoIter}{0.6144485964}
\newcommand{\constRhoOpIter}{0.1}
\newcommand{\constEBopTwoIter}{0.0005048197920}
\newcommand{\constMBThreeThreeIter}{0.2089055233}
\newcommand{\constMBTwoTwoIter}{4.388918546}
\newcommand{\constMBThreeIter}{0.5933577790}
\newcommand{\constMaxEpsIter}{0.190000303}
\newcommand{\constFOaIter}{1.459164221}
\newcommand{\constFObIter}{1.780349459}
\newcommand{\constFOcIter}{2.096937862}
\newcommand{\constFOdIter}{2.387653719}
\newcommand{\constFOeIter}{2.656941273}
\newcommand{\constFOfIter}{2.909180305}
\newcommand{\constFOgIter}{3.147611526}
\newcommand{\constFOhIter}{3.374605257}
\newcommand{\constFOiIter}{3.591932780}
\newcommand{\constFOjIter}{3.800951606}
\newcommand{\constbetaTwoGrowthIter}{48.515}
\newcommand{\constbetaThreeGrowthIter}{487.17}
\newcommand{\constloglengthiter}{1.5}
\newcommand{\constProdFactor}{1.004212}
\newcommand{\constProdBound}{1.506318}
\newcommand{\constSumTwoFactor}{1.002631}
\newcommand{\constSumThreeFactor}{1.004382}
\newcommand{\constTauTwoOffset}{0.00334}
\newcommand{\constTauThreeOffset}{0.00779}
\newcommand{\constTauTwoBound}{1.21974}
\newcommand{\constTauThreeBound}{2.84605}
\title{Covering systems with restricted divisibility}
\author{Robert D. Hough and Pace P. Nielsen}
\thanks{The project was sponsored by the National Security Agency under Grant
Number H98230-16-1-0048 and by the National Science Foundation under grant numbers DMS-1712682 and DMS-1802336.}
\begin{document}

\begin{abstract}
We prove that every distinct covering system has a
modulus divisible by either 2 or 3.
\end{abstract}

\maketitle

\section{Introduction}
A covering system of congruences is a collection
\[
 a_i \bmod m_i, \qquad i = 1, 2, ..., k
\]
such that every integer satisfies at least one of them.  A covering system is
distinct if the moduli $m_i$ are distinct and greater than 1. Erd\H{o}s introduced the idea of a distinct covering system of congruences in constructing an arithmetic progression of odd numbers, none of whose members are prime \cite{E50}. In the paper \cite{E50} Erd\H{o}s asked whether the least modulus of a distinct covering system of congruences can be arbitrarily large.  The first author recently answered this question in the negative \cite{H15}, proving that the least modulus of a distinct covering system of congruences is at most $10^{16}$.  The largest known minimum modulus is 42, given by Tyler Owens \cite{O14}.  A second old problem of
Erd\H{o}s and Selfridge asks whether there exists a distinct covering system of
congruences with all moduli odd. According to \cite{FFK00} Erd\H{o}s has offered \$25 for the proof that no odd distinct covering system of congruences exists, while Selfridge has offered \$2000 for a construction of an odd distinct covering system. Schinzel proved that a negative answer to the odd modulus problem has applications to the irreducibility of families of polynomials. While the odd modulus problem remains open,  Simpson and Zeilberger \cite{SZ91} proved that a distinct covering system consisting of odd square-free numbers involves at least 18 primes, which was improved to 22 primes by Guo and Sun \cite{GS05}.  This paper makes further negative progress towards the odd modulus problem.
\begin{theorem}\label{main_theorem}
 Every distinct covering system of congruences has a modulus divisible by
either $2$ or $3$.
\end{theorem}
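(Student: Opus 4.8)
The plan is to argue by contradiction: suppose there is a distinct covering system all of whose moduli $m_i$ are coprime to $6$, so that every prime factor of every $m_i$ is at least $5$. Let $p_1 = 5 < p_2 = 7 < \cdots < p_t$ be the primes dividing $N := \LCM_i m_i$, write $e_\ell = v_{p_\ell}(N)$, and set $N_0 = 1$, $N_\ell = \prod_{i\le \ell} p_i^{e_i}$, so $N_t = N$. Following the distortion method of \cite{H15}, I would build, stage by stage for $\ell = 1, \dots, t$, a sub-probability measure $\mu_\ell$ on $\bZ/N_\ell\bZ$ recording the ``mass still uncovered'' after accounting for all congruences $a_i \bmod m_i$ whose largest prime factor is $\le p_\ell$ (equivalently $m_i \mid N_\ell$). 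One starts from $\mu_0$ = the unit point mass on $\bZ/1\bZ$; passing from $\mu_{\ell-1}$ to $\mu_\ell$ consists of two moves: \emph{lift} $\mu_{\ell-1}$ from $\bZ/N_{\ell-1}\bZ$ to $\bZ/N_\ell\bZ$ by spreading the mass over each fibre $\cong \bZ/p_\ell^{e_\ell}\bZ$ according to a carefully chosen \emph{distortion} distribution $\rho_\ell$ (rather than uniformly), and then \emph{delete} the residue classes $a_i \bmod m_i$ of the congruences whose maximal prime is exactly $p_\ell$. The objective is to prove that $\mu_t$ has strictly positive total mass: this exhibits an uncovered residue class and contradicts the covering property.

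The technical engine is a per-stage estimate, analysed locally on a single fibre. A congruence $a_i \bmod m_i$ with $m_i = p_\ell^b d$ (here $d \mid N_{\ell-1}$ and $b \ge 1$) meets the fibre over $x \bmod N_{\ell-1}$ only when $x \equiv a_i \bmod d$, and there it removes one residue class mod $p_\ell^b$; since the moduli are \emph{distinct}, for each pair $(b,d)$ there is at most one such congruence, so within any fibre the deleted set is a union of at most $\tau(N_{\ell-1})$ classes mod $p_\ell^b$ for each $b \le e_\ell$. The distortion $\rho_\ell$ is chosen — after seeing these classes — to minimise the worst-case $\rho_\ell$-mass of the deleted region while keeping $\mu_\ell$ ``spread out'' (not concentrated on few residues), a property I would track through one or two explicit functionals of the measure, roughly weighted second- and third-moment quantities together with an $L^\infty$-type uniformity bound, evolving multiplicatively from stage to stage. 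The crux is to exhibit a \emph{good region} for these functionals that is forward-invariant: if $\mu_{\ell-1}$ lies in the good region, then after an optimal distortion and the stage-$\ell$ deletions the mass is not annihilated ($\delta_\ell := \mu_\ell(\bZ/N_\ell\bZ) > 0$) and $\mu_\ell$ again lies in the good region. Granting invariance, since $t$ is finite and each stage only multiplies the surviving mass by a positive factor, $\delta_t > 0$ follows and we are finished.

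The hard part — and the reason the argument needs the elaborate slate of constants fixed above, together with a genuine separation into cases — is verifying this invariance. For the finitely many small primes $p = 5, 7, 11, 13, \dots$ up to a threshold one must solve the underlying optimisation essentially explicitly (the ``first-order'' optimisation producing the constants $\constFOa, \constFOb, \dots$ and the associated moment bounds $\constbetaTwonew$, $\constbetaThreenew$, and so on), because for small $p$ each stage removes a non-negligible fraction of every fibre and the distortion must be tuned sharply. For all sufficiently large primes one instead runs an asymptotic version of the same estimate — the ``iteration'', with constants $\constFOaIter, \dots$ and growth bounds $\constbetaTwoGrowthIter$, $\constbetaThreeGrowthIter$ — where the losses are small and one shows the functionals remain inside a slightly enlarged good region no matter how many large primes occur; the two analyses are then \emph{spliced} together (around $p \approx \constSplice$) so as to cover all primes simultaneously. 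The reason $5$, not $2$ or $3$, is the least admissible prime is precisely that the good region is nonempty and forward-invariant only when every prime processed is $\ge 5$: at $p = 2$ or $p = 3$ a single stage can delete a $\tfrac12$- or $\tfrac13$-share of each fibre, the distortion cannot compensate, the invariance breaks — and indeed distinct covering systems with a modulus divisible by $2$ or $3$ do exist, so it must break there. Assembling the small-prime computations, the large-prime iteration, and the splice yields $\mu_t \ne 0$, contradicting the assumed covering and proving Theorem~\ref{main_theorem}.
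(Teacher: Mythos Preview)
Your outline follows the original \cite{H15} distortion method almost verbatim, but that method on its own only reaches a minimum modulus of about $10^{16}$; the present paper requires substantially sharper tools, and none of them appear in your proposal. The two key missing ingredients are: (i) in the \emph{initial} stage (primes $5 \le p < 222$) the paper does not distort and estimate moments, but applies the Shearer-type Theorem \ref{initial_shearer_theorem} directly, which gives the near-optimal bound on the uncovered density and, crucially, on the \emph{relative} density in progressions (this is what yields the initial $\beta_2, \beta_3$); (ii) in every subsequent stage the paper processes primes in large \emph{batches} $[P_i, P_{i+1})$ and, within each good fibre, applies the Clique Lov\'asz Local Lemma (Theorem \ref{LLL_type_theorem}) with weights $x_p$ found via the Newton-type fixed-point Theorem \ref{finding_lovasz_theorem}. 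The constants $\constFOa, \constFOb, \ldots$ are not ``first-order optimisation'' outputs but bounds on $\|\ux^{\fix}\|_{\infty,\omega}$, and $\constSplice = 100$ is the number of bins in a conditioning argument, not a prime threshold; the actual small/large transition is at $P_1 = 222$ and $P_2 = 4000$.

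The conceptual gap is that one-prime-at-a-time processing with a per-fibre distortion cannot control the biases well enough when $p_1 = 5$: a single stage at $p = 5$ already removes up to a $\tfrac{1}{4}$-fraction of each fibre (via moduli $5, 25, 125, \ldots$), and the moment functionals you propose to track will blow up long before the iteration stabilises --- this is exactly why \cite{H15} stalls at $10^{16}$. The paper sidesteps this by absorbing all primes up to $222$ into a single Shearer step (where the independent-set polynomial is still positive), and then using the LLL relative bound $\exp(\sum_{p|n} x_p)/n$ to control the maximum biases $b_n$ that feed into Lemma \ref{bias_stat_growth_lemma}. Without these two devices your ``good region'' will not be forward-invariant starting from $p = 5$, so the plan as stated does not close.
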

\noindent
This answers a problem raised in \cite{G13}.
\section{Set-up}
Suppose given a finite set of moduli $\cM$, and, for each $m \in \cM$, a set of
residues $\ba_m$ modulo $m$.  Let
\[
Q = \LCM(m: m \in \cM)
\]
and
\[
 R = \zed \setminus \bigcup_{m \in \cM} (\ba_m \bmod m),
\]
which is a set defined modulo $Q$.  One
way to show that the congruences
\[(\ba_m \bmod m), \qquad m \in \cM\] do not cover the integers is to give a
positive lower bound for the density of $R$. The proof of Theorem
\ref{main_theorem} gives such a lower bound, although quantitatively it
estimates some related quantities.

If we let
$\zed/Q\zed$ have the uniform probability measure, then the density of $R$ is
equal
to its probability.  For $m\in \cM$ let $A_m$ be the event $(\ba_m \bmod m)$,
which has probability
$\frac{|\ba_m|}{m}$, and extend this to $m |Q$ with $m \not \in \cM$ by
setting $A_m = \emptyset$ for these $m$.  Then
\begin{equation}\label{R_density}
 \Prob(R) = \Prob\left(\bigcap_{ m |Q} A_m^c\right).
\end{equation}
A familiar argument (the Chinese
Remainder Theorem) implies that $A_m$ is independent of any set of congruences
to moduli co-prime to $m$.  Thus a valid dependency graph for the events
$\{A_m: m|Q\}$ has edge $(m,m')$ if and only if $\GCD(m,m')>1$.

A family of results connected to the Lov\'{a}sz Local Lemma give
worst-case lower bounds for the probability of an intersection as in
(\ref{R_density}), taking as input only the events' probabilities and their
dependency graph.  In principle we could hope to prove Theorem
\ref{main_theorem} by directly applying one of these results to claim that the
uncovered set always has a non-zero density, but, as we will see, such a lower
bound cannot
be given, and further input is needed.  Two methods of Lov\'{a}sz type do
figure into our argument, however, as we will describe.

Given the problem of estimating from below the probability of the
intersection of the complements of some events given only their probabilities
and their dependency graph, the best possible estimate has been given by
Shearer \cite{S85}.  The estimate is best possible in the sense that the
argument constructs a probability space and events having the prescribed
probabilities and dependency graph, and such that the lower bound holds with
equality.  However, the condition with which Shearer's result holds can be
difficult to verify, and so the following result is useful because it is
easy to check. Note that this is essentially due to
\cite{SZ91} in
this context.

\begin{theorem}[Shearer-type theorem]\label{initial_shearer_theorem}
 Suppose we have a probability space.  Let $[n] = \{1, 2, ..., n\}$, and assume
that for each $1 \leq i \leq n$ there is a weight $\pi_i$ assigned,
satisfying $1 \geq
\pi_1 \geq \pi_2 \geq ... \geq \pi_n \geq 0$.  Let the sets $\emptyset
\neq T \subset [n]$ index events $A_T$ each having probability
\[
 0 \leq \Prob(A_T) \leq \prod_{t \in T} \pi_t:= \pi_T.
\]
Assume that $A_T$ is independent of the $\sigma$-algebra generated by $\{A_S: S \subset [n], S\cap T
= \emptyset\}$, so that a valid dependency graph for the events $\{A_T:
\emptyset \neq T \subset[n]\}$ has an edge between $S \neq T$ whenever $S \cap
T \neq \emptyset$.

Define $\rho(\emptyset) =1$, and given $\emptyset \neq T \subset [n]$, set
(put an arbitrary total ordering $<$ on $2^{[n]}$ to avoid confusion)
\[
 \rho(T) = 1 - \sum_{\emptyset \neq S_1 \subset T} \pi_{S_1} + \sum_{\substack{
\emptyset \neq S_1, S_2 \subset T\\ S_1<S_2 \text{ disjoint}}}
\pi_{S_1}\pi_{S_2}- \sum_{\substack{ \emptyset \neq S_1, S_2, S_3 \subset
T\\ S_1 < S_2 < S_3 \text{ disjoint}}} \pi_{S_1}\pi_{S_2}\pi_{S_3} + ....
\]
Suppose that
$
 \rho([1]) \geq \rho([2]) \geq ... \geq \rho([n]) > 0.
$
Then for any $\emptyset \neq T \subset [n]$,
\begin{equation}\label{shearer_bound}
 \Prob\left(\bigcap_{\emptyset \neq S \subset T} A_S^c\right) \geq \rho(T) > 0
\end{equation}
and, for any $ T_1 \subset T_2 \subset [n]$,
\begin{equation}\label{relative_shearer_bound}
 \frac{\Prob\left(\bigcap_{\emptyset \neq S \subset T_2}
A_S^c\right)}{\Prob\left(\bigcap_{\emptyset \neq S \subset T_1} A_S^c\right)}
\geq \frac{\rho(T_2)}{\rho(T_1)}.
\end{equation}

\end{theorem}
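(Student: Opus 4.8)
The plan is to establish the relative estimate \eqref{relative_shearer_bound} first, by induction on $|T_2|$, and then to read \eqref{shearer_bound} off from it together with a one-point base case. Abbreviate $P(T) := \Prob\bigl(\bigcap_{\emptyset \neq S \subseteq T} A_S^c\bigr)$ for nonempty $T \subseteq [n]$ and $P(\emptyset) := 1$, so the two claims become $P(T) \geq \rho(T) > 0$ and $P(T_2)/P(T_1) \geq \rho(T_2)/\rho(T_1)$. A preliminary observation is a recursion for $\rho$: splitting the families of pairwise disjoint nonempty subsets of $T$ according to whether some member contains a fixed $j \in T$ (at most one can, by disjointness) gives
\begin{equation*}
 \rho(T) \;=\; \rho(T\setminus\{j\}) \;-\; \sum_{\substack{\emptyset \neq S \subseteq T \\ j \in S}} \pi_S\,\rho(T\setminus S), \qquad j \in T. \tag{$\ast$}
\end{equation*}

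Next one needs $\rho(T) > 0$ for \emph{every} nonempty $T \subseteq [n]$, not only for the initial segments $[m]$, so that the ratios below are meaningful. The hypothesis $\rho([1]) = 1 - \pi_1 \geq \rho([n]) > 0$ forces $\pi_i \leq \pi_1 < 1$; granting this, an induction on $|T|$ based on $(\ast)$ shows that $\rho$, viewed as a function of the weights, is nonincreasing in each weight as long as every $\rho$ occurring in $(\ast)$ stays positive. Since the weights $\pi_{t_1},\dots,\pi_{t_m}$ attached to $T = \{t_1 < \cdots < t_m\}$ satisfy $\pi_{t_i} \leq \pi_i$, comparing with $[m]$ yields $\rho(T) \geq \rho([m]) > 0$. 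I expect this monotonicity bookkeeping to be the fiddliest of the routine parts.

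The heart of the induction is a one-step form of \eqref{relative_shearer_bound}: fix $j \notin T'$ with $P(T') > 0$ and put $B := \bigcap_{\emptyset \neq S \subseteq T'} A_S^c$ (so $\Prob(B) = P(T')$). Splitting the nonempty subsets of $T' \cup \{j\}$ into those avoiding $j$ and those containing it and using a union bound,
\[
 \frac{P(T'\cup\{j\})}{P(T')} \;=\; \Prob\Bigl(\,\bigcap_{\substack{S \subseteq T'\cup\{j\} \\ j \in S}} A_S^c \ \Big|\ B\Bigr) \;\geq\; 1 - \sum_{\substack{S \subseteq T'\cup\{j\} \\ j \in S}} \frac{\Prob(A_S \cap B)}{P(T')}.
\]
For each such $S$, every nonempty $S' \subseteq T'\setminus S$ is disjoint from $S$, so $A_S$ is independent of $B_S := \bigcap_{\emptyset \neq S' \subseteq T'\setminus S} A_{S'}^c \supseteq B$, and hence $\Prob(A_S \cap B) \leq \Prob(A_S)\Prob(B_S) \leq \pi_S\, P(T'\setminus S)$. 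Substituting this and invoking $(\ast)$ for $T' \cup \{j\}$, the target inequality $P(T'\cup\{j\})/P(T') \geq \rho(T'\cup\{j\})/\rho(T')$ becomes equivalent to $\sum_{S \ni j}\pi_S\bigl(P(T'\setminus S)/P(T') - \rho(T'\setminus S)/\rho(T')\bigr) \leq 0$, which holds term by term because $P(T'\setminus S)/\rho(T'\setminus S) \leq P(T')/\rho(T')$ is precisely \eqref{relative_shearer_bound} for the pair $T'\setminus S \subseteq T'$, a pair of strictly smaller sets.

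To complete the induction on $|T_2|$, given $T_1 \subseteq T_2$ take a maximal chain $T_1 = U_0 \subsetneq U_1 \subsetneq \cdots \subsetneq U_r = T_2$; a short sub-induction along the chain (beginning with $P(T_1) > 0$, which is trivial if $T_1 = \emptyset$ and otherwise known at the smaller scale, and applying the one-step bound at each link, which appeals to \eqref{relative_shearer_bound} only for sets of size $< |T_2|$) shows $P(U_i) > 0$ and $P(U_i)/P(T_1) \geq \rho(U_i)/\rho(T_1)$ for all $i$, giving \eqref{relative_shearer_bound}. Then \eqref{shearer_bound} drops out: for any $i \in T$, the base case $P(\{i\})/\rho(\{i\}) = \bigl(1 - \Prob(A_{\{i\}})\bigr)/(1 - \pi_i) \geq 1$ combined with \eqref{relative_shearer_bound} for $\{i\} \subseteq T$ gives $P(T) \geq \rho(T) > 0$. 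The single delicate point is the reduction in the previous paragraph: a bare union bound yields only $P(T) \geq P(T\setminus\{j\}) - \sum_{S \ni j}\pi_S P(T\setminus S)$, and matching this against $(\ast)$ requires controlling the ratios $P(T\setminus S)/P(T\setminus\{j\})$ from above — i.e.\ the relative bound at a smaller scale — so the whole game is to arrange the induction (and the positivity of the second paragraph, on which it leans) so that this input is already in hand.
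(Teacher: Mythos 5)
Your proof is correct and matches the paper's argument in essence: the paper packages the inductive one-step bound (union bound plus the independence hypothesis, telescoped along a chain) into a more general ``Clique Shearer Theorem'' in Appendix~C, and separately establishes positivity of $\rho$ on \emph{all} subsets --- not just initial segments --- by exactly the monotonicity-in-the-weights argument you sketch, using that $\rho$ is affine linear in each $\pi_t$ (equivalently, your recursion $(\ast)$). Your direct treatment of $\rho$ in place of the clique independent-set polynomial $\Xi$ is only a presentational specialization, not a different route.
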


We prove a slightly more general version of this theorem in Appendix
\ref{shearer_appendix}.

To apply the Shearer-type theorem  in the context of Theorem
\ref{main_theorem}, order the primes greater than
3 as $p_1 = 5, p_2 = 7, p_3 = 11, ...$.  Suppose we are given a distinct
congruence system with moduli formed with the primes $p_1, ..., p_n$.  Identify
$S \subset [n]$ with the square-free number $m_S = \prod_{i \in S} p_i$ and form
the event $A_S$ which is the union of all congruences having square-free part
$m_S$,
\[
 A_S = \bigcup_{m : \;\sqf(m) = m_S} (a_m \bmod m),
\]
where $\sqf(m) = \prod_{p: p|m} p$.  Then $A_S$ is an event with probability
\[
 \Prob(A_S) < \prod_{i \in S} \frac{1}{p_i - 1}.
\]
In particular, we may appeal to Theorem \ref{initial_shearer_theorem} with
$\pi_i =
\frac{1}{p_i-1}$.  Arguing in this way, we may check that there is no covering
composed of only the primes between 5 and 631, but at this point, the Shearer
function becomes negative, and no further result can be drawn from that
estimate.

What allows us to make further
progress is that, within the range in which
Shearer's theorem holds,  estimate
(\ref{relative_shearer_bound}) of Theorem \ref{initial_shearer_theorem} gives
substantial information about the structure of the uncovered set.  To see this,
suppose that we have a congruence system as above with uncovered set $R$, and
that Theorem \ref{initial_shearer_theorem} applies. We can estimate the
proportion
of the set $R$ that lies in a given congruence class $(b \bmod m)$ for $m
|Q$ by
\begin{align*}
 \frac{\Prob((b \bmod m) \cap R) }{\Prob(R)}  &= \frac{\Prob\left((b\bmod m)
\cap \bigcap_{m' \in \cM, m'|Q}(a_{m'} \bmod m')^c\right)}{\Prob
\left(\bigcap_{m' \in \cM, m' | Q} (a_{m'} \bmod m')^c \right)}\\& \leq
\Prob((b \bmod m)) \frac{\Prob\left(\bigcap_{m' \in \cM, m'|Q,
(m,m')=1}(a_{m'} \bmod m')^c \right)}{\Prob
\left(\bigcap_{m' \in \cM, m' | Q} (a_{m'} \bmod m')^c \right)}
\\& = \frac{1}{m} \frac{\Prob\left(\bigcap_{m' \in \cM, m'|Q,
(m,m')=1}(a_{m'} \bmod m')^c \right)}{\Prob
\left(\bigcap_{m' \in \cM, m' | Q} (a_{m'} \bmod m')^c \right)}.
\end{align*}
The ratio of probabilities on the right is bounded by the relative conclusion
(\ref{relative_shearer_bound}) of Theorem \ref{initial_shearer_theorem},
which gives a ratio of
$\frac{\rho([n]\setminus S_m)}{\rho([n])}$ where $[n]$ again represents the
full set of primes
dividing $Q$, and $S_m$ is those primes from $[n]$ which  divide $m$.
Thus
\begin{equation*}
 \frac{\Prob((b \bmod m) \cap R) }{\Prob(R)}  \leq \frac{1}{m}
\frac{\rho([n]\setminus S_m)}{\rho([n])}.
\end{equation*}
If $S_m$ is such that $\rho([n]\setminus S_m) \approx \rho([n])$ then we deduce
that $R$ is almost uniformly distributed across residues modulo $m$.

We summarize the  above discussion in the following Theorem.
\begin{theorem}\label{shearer_applied_theorem}
 Let $p_1 < p_2 < ...< p_n$ be a sequence of primes, and let weights
$\pi_1, ..., \pi_n$ given by $\pi_i = \frac{1}{p_i-1}$.   For a
subset $S \subset [n]$ identify $S$ with $q_S = \prod_{p \in S}p,$ and write
$\rho(q) =
\rho(q_S) = \rho(S)$ for the Shearer function associated to $S$ with
weights $\pi_i$, as in Theorem \ref{initial_shearer_theorem}.

Suppose that $\rho(p_1) \geq \rho(p_1p_2) \geq
... \geq \rho(p_1p_2...p_n) > 0$.  Then any distinct congruence system with
moduli
composed only of $p_1, ..., p_n$ does not cover the integers.  Moreover, if $R$
is the uncovered set and if $m$ is a modulus composed of
primes corresponding to a set $S \subset [n]$ then
\begin{equation}\label{shearer_progression}
 \max_{b\bmod m} \frac{|R \cap (b \bmod m)|}{|R|} \leq
\frac{1}{m} \frac{\rho(q_{[n]\setminus S})}{\rho(q_{[n]})}.
\end{equation}

\end{theorem}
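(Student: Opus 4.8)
The plan is to check that the events attached to the congruence system satisfy the hypotheses of Theorem \ref{initial_shearer_theorem} with weights $\pi_i=\tfrac{1}{p_i-1}$, and then to read off its two conclusions. Since $2\le p_1<p_2<\dots<p_n$ we have $1\ge \pi_1\ge\pi_2\ge\dots\ge\pi_n>0$, so the weight condition holds. Work inside $\zed/Q\zed$ with the uniform measure, where $Q=\LCM(m:m\in\cM)$ for the (distinct) set of moduli $\cM$, and for $\emptyset\ne S\subseteq[n]$ put $q_S=\prod_{i\in S}p_i$ and $A_S=\bigcup_{m:\,\sqf(m)=q_S}(a_m\bmod m)$. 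Because the moduli are distinct, each $m$ with $\sqf(m)=q_S$ contributes at most one congruence, so the union bound and a geometric series give $\Prob(A_S)\le\sum_{m\ge 1,\ \sqf(m)=q_S}\tfrac1m=\prod_{i\in S}\tfrac{1}{p_i-1}=\pi_S$.

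Next I would establish the dependency structure. If $S'\cap S=\emptyset$ then every modulus contributing to $A_{S'}$ is coprime to every modulus contributing to $A_S$, so by the Chinese Remainder Theorem $A_S$ is independent of the $\sigma$-algebra generated by $\{A_{S'}:S'\cap S=\emptyset\}$; hence the graph with an edge between $S\ne T$ exactly when $S\cap T\ne\emptyset$ is a valid dependency graph, as Theorem \ref{initial_shearer_theorem} requires. Under the identification $S\leftrightarrow q_S$ the initial segment $[k]$ corresponds to $p_1p_2\cdots p_k$, so the standing hypothesis $\rho(p_1)\ge\rho(p_1p_2)\ge\dots\ge\rho(p_1\cdots p_n)>0$ is exactly the hypothesis $\rho([1])\ge\dots\ge\rho([n])>0$ of that theorem. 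Now apply (\ref{shearer_bound}) with $T=[n]$: every modulus of the system is $>1$ and composed only of $p_1,\dots,p_n$, so the set it covers is precisely $\bigcup_{\emptyset\ne S\subseteq[n]}A_S$, the uncovered set $R$ equals $\bigcap_{\emptyset\ne S\subseteq[n]}A_S^c$, and $\Prob(R)\ge\rho(q_{[n]})>0$. A subset of $\zed/Q\zed$ of positive density is nonempty, so the congruences do not cover $\zed$.

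For the distributional estimate, fix a modulus $m$ whose set of prime divisors is $S\subseteq[n]$ and a residue class $b\bmod m$. Discarding from the intersection defining $R$ every complement coming from a modulus not coprime to $m$ only enlarges the set, and $b\bmod m$ is, again by the Chinese Remainder Theorem, independent of the intersection of the complements coming from moduli coprime to $m$; the latter moduli are exactly those whose square-free part equals $q_{S'}$ for some $\emptyset\ne S'\subseteq[n]\setminus S$. Hence
\[
\frac{\Prob\big((b\bmod m)\cap R\big)}{\Prob(R)}\ \le\ \frac1m\cdot\frac{\Prob\big(\bigcap_{\emptyset\ne S'\subseteq[n]\setminus S}A_{S'}^c\big)}{\Prob\big(\bigcap_{\emptyset\ne S'\subseteq[n]}A_{S'}^c\big)}\ \le\ \frac1m\cdot\frac{\rho(q_{[n]\setminus S})}{\rho(q_{[n]})},
\]
where the last inequality is (\ref{relative_shearer_bound}) applied with $T_1=[n]\setminus S\subseteq T_2=[n]$. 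Since $\frac{|R\cap(b\bmod m)|}{|R|}$ equals the left-hand side and $b$ was arbitrary, this is (\ref{shearer_progression}).

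The whole argument is essentially a repackaging of Theorem \ref{initial_shearer_theorem}, so there is no deep obstacle; the care needed is purely bookkeeping — confirming that $\Prob(A_S)\le\pi_S$ uses only distinctness, that $\bigcup_S A_S$ is exactly the covered set so that its complement really is $R$, and that the correspondence between subsets of $[n]$ and square-free products lines up the two initial-segment hypotheses. I expect the fussiest point to be the independence claim, which is invoked twice: once to produce a valid dependency graph for the family $\{A_S\}$, and once to peel the class $b\bmod m$ off the intersection over moduli coprime to $m$.
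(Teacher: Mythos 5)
Your proof is correct and follows essentially the same route as the paper's: the paper proves Theorem \ref{shearer_applied_theorem} implicitly by the discussion that immediately precedes it (define $A_S$ as the union of congruences with square-free part $q_S$, bound $\Prob(A_S)$ by the geometric sum $\prod_{i\in S}\tfrac{1}{p_i-1}$, invoke CRT for the dependency graph, then read off conclusions \eqref{shearer_bound} and \eqref{relative_shearer_bound}), and you reproduce exactly that chain, including the peeling-off step via moduli coprime to $m$ and the application of the relative bound with $T_1=[n]\setminus S\subset T_2=[n]$. No gaps.
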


Although the sieving problem described in Theorem \ref{main_theorem} concerns
systems of congruences in which each congruence set $\ba_m$ has size 0 or 1, in
the course of our argument we consider congruences with  sets $\ba_m$ of
variable size.  In this situation the condition of Theorem
\ref{initial_shearer_theorem} becomes unwieldy and we appeal instead to the
following Theorem,
which follows from an improved form of the Lov\'{a}sz Local Lemma due to
\cite{BFPS11}, see also \cite{ScSo05}.

\begin{theorem}\label{LLL_type_theorem}
 Let $\cN \subset \bN_{>1}$ be a finite collection of moduli whose prime
factors are drawn from a set of primes $\cP$.  Let $\LCM(n: n\in
\cN) = Q$. Suppose that for each $n \in \cN$ a collection of residues $\ba_n
\bmod
n$
is given. Write
\[
 R = \zed \setminus \bigcup_{n \in \cN} (\ba_n \bmod n).
\]  Suppose that there
exist weights
$
 \{x_p\}_{p \in \cP}$ with $ x_p \geq 0
$,
which satisfy the constraints
\[
 \forall p \in \cP, \qquad x_p \geq \sum_{n \in \cN: p |n} \frac{|\ba_n \bmod
n|\prod_{p' |n } (1 + x_{p'})}{n}.
\]
Then the density of $R$ is at least
\begin{equation}\label{LLL_density}
\frac{|R \bmod Q|}{Q} \geq \exp\left( - \sum_{n \in \cN} \frac{|\ba_n \bmod
n|\prod_{p|n}(1 + x_p)}{n} \right)>0.
\end{equation}
 Also, for any $n \in \cN$,
\begin{equation}\label{relative_LLL}
 \max_{b \bmod n} \frac{|R \cap (b \bmod n) \bmod Q|}{|R \bmod Q|} \leq
\frac{\exp\left(\sum_{p|n} x_p\right)}{n}.
\end{equation}

\end{theorem}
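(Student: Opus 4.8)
The plan is to deduce this from the asymmetric Lovász Local Lemma in the refined form that tracks, for each event, the quantity $\mu(A) = \Prob(A)\prod_{B \sim A}(1+x_B)$, as in \cite{BFPS11,ScSo05}. The event structure here is the natural one: for each $n \in \cN$ and each residue class in $\ba_n \bmod n$ we take one ``bad'' event (a single congruence class mod $n$), working in $\zed/Q\zed$ with the uniform measure. Two such events are dependent only if the underlying moduli share a prime factor, so a valid dependency graph joins events whose moduli are not coprime — exactly as in the discussion following \eqref{R_density}. The weight we assign to a bad event coming from modulus $n$ is $x_n := \prod_{p \mid n}(1+x_p) - 1$; the point of this multiplicative choice is that the product $\prod_{p \mid n}(1+x_p)$ telescopes nicely against the per-prime weights $x_p$, so that the per-event LLL inequality $x_B \ge \Prob(B)\prod_{C \sim B}(1 + x_C)$ will follow from the stated per-prime constraints.

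First I would verify the LLL hypothesis. Fix a bad event $B$ from modulus $n$. Its neighbors are the bad events from moduli $n'$ with $\GCD(n,n') > 1$, i.e.\ those sharing some prime $p \mid n$. I would bound $\prod_{C \sim B}(1+x_C)$ by grouping the neighbors by a chosen prime divisor; because $1 + x_C = \prod_{p' \mid n'}(1+x_{p'})$, the product over all bad events attached to a fixed prime $p$ contributes $\prod_{n' : p \mid n'}\bigl(\prod_{p' \mid n'}(1+x_{p'})\bigr)^{|\ba_{n'} \bmod n'|}$, and the logarithm of this is controlled by the right-hand side of the stated constraint on $x_p$. Combining over the (at most) all primes dividing $n$ and using $\Prob(B) = 1/n$ together with $x_n + 1 = \prod_{p\mid n}(1+x_p)$, the required inequality $x_B \ge \Prob(B)\prod_{C\sim B}(1+x_C)$ reduces to the hypothesis $x_p \ge \sum_{n:\,p\mid n}\frac{|\ba_n\bmod n|\prod_{p'\mid n}(1+x_{p'})}{n}$; this is the step that makes the multiplicative ansatz pay off. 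Then the refined LLL gives $\Prob\bigl(\bigcap B^c\bigr) \ge \prod (1+x_B)^{-1}$ over all bad events, and grouping the bad events by modulus and taking logs turns $\prod_n \bigl(\prod_{p\mid n}(1+x_p)\bigr)^{-|\ba_n \bmod n|}$ into $\exp\bigl(-\sum_n |\ba_n\bmod n|\sum_{p\mid n}\log(1+x_p)\bigr)$; bounding $\log(1+x_p)\le x_p$ (equivalently $1+t\le e^t$) yields \eqref{LLL_density}, and $\Prob(\bigcap B^c) = |R \bmod Q|/Q$ since $R$ is exactly the complement of the union of the bad classes.

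For the relative bound \eqref{relative_LLL}, I would use the conditional form of the LLL: for a fixed modulus $n$ and residue $b$, the event $(b \bmod n)$ has probability $1/n$, and the refined LLL (in its conditional incarnation, cf.\ \cite{BFPS11}) bounds $\Prob\bigl((b\bmod n) \mid \bigcap B^c\bigr)$ by $\Prob(b \bmod n)\prod_{C \sim (b\bmod n)}(1+x_C)$, where the neighbors of $(b\bmod n)$ are again the bad events on moduli not coprime to $n$. Running the same grouping-by-prime estimate as above, this product is at most $\prod_{p\mid n}(1+x_p)/\bigl(\text{the slack in the }x_p\text{ constraints}\bigr) \le \prod_{p\mid n}(1+x_p)$, so $\Prob\bigl((b\bmod n)\cap R\bigr)/\Prob(R) \le \frac{1}{n}\prod_{p\mid n}(1+x_p) \le \frac{1}{n}\exp\bigl(\sum_{p\mid n}x_p\bigr)$, using $1+t\le e^t$ once more. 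Taking the maximum over $b$ gives \eqref{relative_LLL}.

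The main obstacle is bookkeeping rather than any conceptual difficulty: one must be careful that a single bad event attached to $n$ is counted against \emph{every} prime $p\mid n$ in the neighbor-grouping, so the naive product over primes overcounts, and one has to either argue that the overcounting only strengthens the inequality (all factors $\ge 1$) or organize the sum so each neighbor is charged to one prime; either way the stated per-prime hypothesis is exactly what is needed, so I expect no loss. A secondary point of care is citing \cite{BFPS11} in the precise form that yields both the unconditional product lower bound and the conditional upper bound simultaneously with the same weights $x_B$; this is standard but should be stated cleanly.
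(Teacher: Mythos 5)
Your proposal shares the spirit of the paper's proof --- both deduce Theorem~\ref{LLL_type_theorem} from the cluster-expansion improvement of the local lemma due to \cite{BFPS11}, with multiplicative weights indexed by primes --- but there is a genuine gap in both the verification of the LLL hypothesis and the derivation of the density bound, and the source of both problems is the same: your event weight $x_B = \prod_{p\mid n}(1+x_p) - 1$ discards the factor $1/n$.

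First, the hypothesis verification. You want $x_B \geq \Prob(B)\prod_{C\sim B}(1+x_C)$, and you propose to bound the product over neighbors by grouping by prime, obtaining for each $p \mid n$ the factor $\prod_{n':p\mid n'}\bigl(\prod_{p'\mid n'}(1+x_{p'})\bigr)^{|\ba_{n'}\bmod n'|}$. Taking logarithms this is $\sum_{n':p\mid n'}|\ba_{n'}|\sum_{p'\mid n'}\log(1+x_{p'})$, which is not controlled by the hypothesis $\sum_{n':p\mid n'}\frac{|\ba_{n'}|\prod_{p'\mid n'}(1+x_{p'})}{n'}\leq x_p$: the hypothesis carries a crucial $1/n'$ suppression, while your product of $(1+x_C)$ over \emph{events} has no such factor and is generically enormous. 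The cluster-expansion condition of \cite{BFPS11} is not a product over the full neighborhood but a sum over independent subsets, $\mu_B \geq \Prob(B)\,\phi_B(\umu)$ with $\phi_B(\umu)=\sum_{R\text{ indep}}\prod_{C\in R}\mu_C$; the paper exploits the independence constraint, noting that an independent set of pairwise-coprime square-free parts can be injected into the primes of $v$, so $\phi_v(\umu)\leq\prod_{p\mid v}\bigl(1+\sum_{v':p\mid v'}\mu_{v'}\bigr)$. The inner sum (not product!) carries the $1/n'$ inside $\mu_{v'}$, which is exactly what the per-prime constraint bounds.

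Second, the conclusion. Your lower bound $\prod_B(1+x_B)^{-1}=\exp\bigl(-\sum_n|\ba_n|\sum_{p\mid n}\log(1+x_p)\bigr)$ again lacks the $1/n$ and is far weaker than \eqref{LLL_density}. The paper instead aggregates all congruences with square-free part $v$ into a single event $A_v$ with probability $\pi_v=\sum_{n:\sqf(n)=v}|\ba_n|/n$, and takes BFPS weight $\mu_v=\pi_v\prod_{p\mid v}(1+x_p)$. Then conclusion \eqref{typical_conclusion} gives $\exp(-\sum_v\mu_v)$, and unfolding $\pi_v$ yields precisely $\exp\bigl(-\sum_n\frac{|\ba_n|\prod_{p\mid n}(1+x_p)}{n}\bigr)$. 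Both the aggregation by square-free modulus and the inclusion of the probability in the weight $\mu_v$ are essential; once you set this up correctly, the relative bound \eqref{relative_LLL} follows from the relative form \eqref{relative_version} by the same prime-grouping estimate, exactly as you sketch, and that part of your outline is sound.
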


\begin{remark} Conclusion (\ref{LLL_density}) corresponds to
(\ref{shearer_bound}) of Theorem
\ref{initial_shearer_theorem}, and (\ref{relative_LLL}) corresponds to
(\ref{shearer_progression}).
\end{remark}

If we write $\ux$ for $\{x_p\}_{p \in \cP}$ and $\uG(\ux)$ for
\[
 G_p(\ux) = \sum_{n \in \cN: p|n} \frac{|\ba_n \bmod n| \prod_{p'|n} (1 +
x_{p'})}{n}
\]
then the condition of Theorem \ref{LLL_type_theorem} equivalently asks for a
non-negative ($\ux \geq \uO$) fixed point $\uG(\ux) = \ux$, which is
relatively
easy to determine.  Thus, although Theorem \ref{LLL_type_theorem} is
strictly weaker than Theorem \ref{initial_shearer_theorem}, it is useful since
it is
more easily applied.

A proof and further discussion of  Theorem \ref{LLL_type_theorem} is given in
Section \ref{LLL_section}.

\section{Overview of  argument}

We now give an overview of our argument.  As the structure  is
similar to that of
 the minimum modulus problem we refer the proofs of some background statements
to \cite{H15}.

We assume given a congruence system with
finite set of moduli \[\cM \subset \{m > 1, (m,6)=1\},\] together with a
residue class $a_m \bmod m$ for each $m \in \cM$. We let
\[
 Q = \LCM(m: m \in \cM),
\]
and set
\[
 R = \zed \setminus \bigcup_{m \in \cM} (a_m \bmod m)
\]
for the set left uncovered by the congruence system.  Theorem
\ref{main_theorem}  follows by showing that the density of $R$ is positive.

To estimate the density of $R$ we appeal to  Lov\'{a}sz Local Lemma-type
arguments of the previous section.
These arguments, however, only apply to estimate the density of sets left
uncovered by congruence systems whose moduli are composed of a limited number of
primes, and so    we break the
estimate for the density of $R$ into stages.

Let $P_0 = 4 < P_1 < P_2 < ...$ be a sequence of
real numbers (not equal to prime integers). Let $Q_0 = 1$ and, for
$i \geq 1$, \[Q_i = \prod_{p^j \| Q, p < P_i} p^j\] be the part of $Q$ composed
of primes less than $P_i$.  We let $\cM_i = \{m \in \cM: m|Q_i\}$ be the
$P_i$-smooth moduli in $\cM$, and we let the set of `new
factors' be \[\cN_i = \{n > 1: n|Q_i, p|n \Rightarrow P_{i-1} < p \leq P_i\}.\]
Notice that each $m \in \cM_{i+1} \setminus \cM_{i}$ has a unique
factorization as $m = m_0 n$ with $m_0 |Q_{i}$ and $n \in \cN_{i+1}$.

We consider the sequence of sets $\zed = R_0
\supset R_1 \supset ...$,
\[
 \forall i \geq 1, \qquad R_i = \zed \setminus \bigcup_{m \in \cM_i} (a_m \bmod
m) .
\]
Since $R_i = R$ eventually, it will suffice to show that $R_i$ is non-empty for
each $i$.

The set $R_i$ is defined modulo $Q_i$.  Viewing $\zed/Q_{i+1}\zed$ as fibered
over $\zed/Q_i\zed$ we note that
\[
 R_{i+1} = R_{i}\setminus \bigcup_{m \in \cM_{i+1} \setminus \cM_{i}} (a_m
\bmod m),
\]
so that we may view $R_{i+1}$ as cut out from the fibers $(r \bmod Q_i)$, $r
\in R_i$, by congruences to moduli in $\cM_{i+1} \setminus \cM_i$. Given $r \in
R_i$
and $m \in \cM_{i+1} \setminus \cM_{i}$, factor $m = m_0 n$ with $m_0|Q_{i}$
and $n \in \cN_{i+1}$. Then the congruence $(a_m \bmod m)$ meets $(r \bmod
Q_{i})$
if and only if $r \equiv a_{m_0 n} \bmod m_0$, and when  it does so, it
intersects in a single residue class modulo $nQ_{i}$.  Thus, grouping together
moduli according to common new factor $n \in \cN_{i+1}$ we find
\[
 R_{i+1} \cap (r \bmod Q_i) = (r \bmod Q_i) \setminus \bigcup_{n \in \cN_{i+1}}
A_{n,r},
\]
with
\[
 A_{n,r} = (r \bmod Q_i) \cap \bigcup_{\substack{m_0 |Q_i\\ m_0 n \in
\cM_{i+1}}} (a_{m_0n} \bmod m_0n).
\]
After translating and dilating $(r \bmod Q_i)$ to coincide with the integers,
the set $A_{n,r}$ is composed of some residue classes modulo $n$, a set
which we call $\ba_{n,r}$. Thus we can
understand the problem of estimating the density of $R_{i+1}$ within $(r \bmod
Q_i)$ as sieving the integers by multiple residue classes to moduli in
$\cN_{i+1}$, a set of moduli whose prime factors are constrained to lie in
$[P_i, P_{i+1})$.  This is the situation treated by the Lov\'{a}sz-type
Theorem, Theorem \ref{LLL_type_theorem} above, and so, if we are able to
solve the relevant fixed-point problem then we obtain that the fiber is
non-empty.    Note that in the
initial stage, all of the sieving sets have size 0 or 1, so that in this stage
we can appeal to the optimal Shearer-type Theorem, Theorem
\ref{initial_shearer_theorem}.

In practice we will not estimate the density of $R_{i+1}$ over all of $R_i$,
but only within certain `good' fibers above a subset $R_i^* \subset R_i \bmod
Q_i$. We will be deliberately vague at this point about the requirements of a
good fiber.  Roughly these ensure that the corresponding fixed-point problem has
a
favorable solution.   Also, we
require that  $R_i^* \subset R_{i-1}^* \cap R_i$ so
that the good sets are nested.  We let $R_0^* = R_0 = \zed$.

For $i \geq 1$ we weight the set $\zed/Q_i\zed$ with a
probability
measure $\mu_i$ supported on $R_{i-1}^* \cap R_i$, chosen so as to guarantee
that a large proportion of the fibers are good. The measure $\mu_1$ is
uniform on the set $R_0^* \cap R_1 = R_1\subset \zed/Q_1\zed$,
\[
 \forall r \in R_0^* \cap R_1\bmod Q_1, \qquad \mu_1(r) = \frac{1}{|R_1 \bmod
Q_1|}.
\]
Taking the measure $\mu_i$ as given, define, for $i \geq 1$,
\[
 \pi_{\good}(i) = \frac{\mu_i(R_i^*)}{\mu_i(R_{i-1}^* \cap R_i)}
\]
to be the proportion of good fibers.
For $i \geq 1$ and $r \in R_{i}^* \cap R_{i+1}\bmod Q_{i+1}$ we set
\[
 \mu_{i+1}(r) = \frac{\mu_{i}(r \bmod Q_{i})}{\pi_{\good}(i)|R_{i+1} \cap (r
\bmod Q_{i}) \bmod
Q_{i+1}|}.
\]
Thus, for a fixed $r \in R_{i}$, $\mu_{i+1}$ is constant on $R_{i+1} \cap (r
\bmod
Q_{i})$. That $\mu_i$ is a sequence of probability measures follows from
\cite{H15} Lemma 2, although, note that the factor of
$\frac{1}{\pi_{\good}(i)}$ is not included in the definition of $\mu_i$ in
\cite{H15}, so that  the measures there do not have mass 1.
Throughout, when we write $\E_{r \in R_{i-1}^*\cap R_i}$ we mean expectation
with respect to the measure $\mu_i$.

Along with the measure $\mu_i$ we track some bias statistics of $R_{i-1}^*
\cap R_i$.  Let $\ell_k(m)$ be the multiplicative function given at primes
powers by
\[
 \ell_k(p^j) = (j+1)^k - j^k.
\]
For $i \geq 1$, the $k$th bias statistic of $R_{i-1}^* \cap R_i$ is defined to
be
\[
 \beta_k^k(i) = \sum_{m|Q_i} \ell_k(m) \max_{b \bmod m}\mu_i((b \bmod m)).
\]
The importance of the bias statistics is that they control moments of
(mixtures of) the sizes of the sets $\ba_{n,r}$ as $r$ varies in $R_{i-1}^* \cap
R_i$.
\begin{lemma}\label{convexity_lemma}
 Let $i \geq 1$.  Let $\{w_n: n \in \cN_{i+1}\}$ be any collection of
non-negative weights, not all of which are zero.  For
each $k \geq 1$ we have
 \[
  \E_{r \in R_{i-1}^* \cap R_i}
 \left(\sum_{n \in \cN_{i+1}} w_n |\ba_{n,r} \bmod n|\right)^k \leq
\left(\sum_{n \in \cN_{i+1}} w_n\right)^k \beta_k^k(i).
 \]
\end{lemma}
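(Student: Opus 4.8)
The plan is to reduce the lemma to a single inequality valid for each fixed $k$‑tuple of new factors, namely
\[
 \E_{r\in R_{i-1}^*\cap R_i}\prod_{l=1}^{k}|\ba_{n_l,r}\bmod n_l|\ \le\ \beta_k^k(i)\qquad\text{for every }(n_1,\dots,n_k)\in\cN_{i+1}^k.
\]
Granting this, the lemma is immediate: one expands
$\E_{r}\big(\sum_{n\in\cN_{i+1}}w_n|\ba_{n,r}\bmod n|\big)^k=\sum_{(n_1,\dots,n_k)\in\cN_{i+1}^k}\big(\prod_{l}w_{n_l}\big)\,\E_{r}\prod_{l}|\ba_{n_l,r}\bmod n_l|$, bounds each inner expectation by $\beta_k^k(i)$, and resums to get $\big(\sum_{n}w_n\big)^k\beta_k^k(i)$. (Here $w_n\ge 0$ is used when resumming; ``not all zero'' only guarantees the statement is non-vacuous.)

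The first step is a local description of $\ba_{n,r}$. Fix $r\in R_{i-1}^*\cap R_i$, viewed as a residue class modulo $Q_i$, and $n\in\cN_{i+1}$. Every modulus $m\in\cM_{i+1}$ contributing to $A_{n,r}$ has a unique factorization $m=m_0n$ with $m_0\mid Q_i$; since $\gcd(m_0,n)=1$ and $m_0\mid Q_i$, the class $(a_m\bmod m)$ meets $(r\bmod Q_i)$ precisely when $r\equiv a_{m_0n}\bmod m_0$, and when it does it meets the fiber in a single residue class modulo $nQ_i$, i.e.\ contributes one residue class modulo $n$ to $\ba_{n,r}$. Discarding possible coincidences among these classes yields the pointwise bound
\[
 |\ba_{n,r}\bmod n|\ \le\ N_n(r):=\sum_{\substack{m_0\mid Q_i\\ m_0n\in\cM_{i+1}}}\one\!\left[r\equiv a_{m_0n}\bmod m_0\right].
\]

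Next I would prove the $k$‑tuple inequality. For fixed $(n_1,\dots,n_k)$, non‑negativity gives $\prod_{l}|\ba_{n_l,r}\bmod n_l|\le\prod_{l}N_{n_l}(r)$, and expanding the product of sums turns $\prod_{l}N_{n_l}(r)$ into a sum, over tuples $(d_1,\dots,d_k)$ with $d_l\mid Q_i$ and $d_ln_l\in\cM_{i+1}$ for each $l$, of the products of indicators $\prod_{l}\one[r\equiv a_{d_ln_l}\bmod d_l]$. For each such tuple the product of indicators is the indicator of a simultaneous system of congruences on $r$, hence by the Chinese Remainder Theorem is either empty or a single residue class modulo $d:=\LCM(d_1,\dots,d_k)\mid Q_i$, and so has $\mu_i$‑expectation at most $\max_{b\bmod d}\mu_i(b\bmod d)$. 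Taking expectations and grouping the tuples by $d$ gives $\E_{r}\prod_{l}|\ba_{n_l,r}\bmod n_l|\le\sum_{d\mid Q_i}c(d)\max_{b\bmod d}\mu_i(b\bmod d)$, with $c(d)$ the number of admissible tuples having least common multiple $d$. The combinatorial heart is the identity $\#\{(d_1,\dots,d_k)\in\bN^k:\LCM(d_1,\dots,d_k)=d\}=\ell_k(d)$, proved because both sides are multiplicative in $d$ and at a prime power $p^a$ the left side counts exponent vectors $(j_1,\dots,j_k)\in\{0,\dots,a\}^k$ with $\max_l j_l=a$, which is $(a+1)^k-a^k=\ell_k(p^a)$. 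Since $c(d)$ is at most this unrestricted count, $c(d)\le\ell_k(d)$, whence $\E_{r}\prod_{l}|\ba_{n_l,r}\bmod n_l|\le\sum_{d\mid Q_i}\ell_k(d)\max_{b\bmod d}\mu_i(b\bmod d)=\beta_k^k(i)$, uniformly in $(n_1,\dots,n_k)$, which closes the reduction.

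I expect the only genuinely delicate point to be the first step: correctly identifying $\ba_{n,r}$ as a union of at most $N_n(r)$ residue classes modulo $n$ by tracking the factorization $m=m_0n$, the coprimality of $m_0$ with $n$ (and with $Q_i$), and the resulting passage from congruences cutting the fiber $(r\bmod Q_i)$ to congruences on $r$ modulo divisors of $Q_i$. Once that translation is set up, the remainder is a routine application of the Chinese Remainder Theorem together with the $\LCM$‑counting interpretation of $\ell_k$, and the bookkeeping of the $k$‑fold expansion.
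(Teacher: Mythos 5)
Your proof is correct and follows essentially the same route as the cited Lemmas 4 and 5 of \cite{H15}, which the paper does not reproduce. The two ingredients you identify are precisely the ones that argument rests on: the pointwise majorant $|\ba_{n,r}\bmod n|\le N_n(r)$ obtained from the unique factorization $m=m_0n$ with $m_0\mid Q_i$, and the multilinear expansion of the $k$-th moment together with the CRT reduction of each summand to a single residue class modulo $\LCM(d_1,\dots,d_k)$ and the multiplicative counting identity $\#\{(d_1,\dots,d_k):\LCM(d_1,\dots,d_k)=d\}=\ell_k(d)$; packaging them as a single $k$-tuple inequality $\E_r\prod_l|\ba_{n_l,r}\bmod n_l|\le\beta_k^k(i)$ is a clean and valid way to organize the reduction.
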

\begin{proof}
 See Lemmas 4 and 5 of \cite{H15}.
\end{proof}

In addition to the bias statistics, it will be useful for us to track maximum
biases among the various good fibers.  Let $i \geq 0$ and let $n \in
\cN_{i+1}$.  We define the maximum bias at $n$ to be
\[
b_n =  \max_{r \in R_{i}^*} \max_{b \bmod n} \frac{n |R_{i+1}\cap (r
\bmod Q_i) \cap (b\bmod n) \bmod Q_{i+1}|}{|R_{i+1} \cap (r \bmod Q_i) \bmod
Q_{i+1}|}.
\]
Note that these appeared only implicitly in \cite{H15}, but to
get a better quantitative bound it will be useful for us to track them more
carefully here.

The iterative growth of the bias statistics $\beta_k(i)$ to $\beta_k(i+1)$ is
controlled by the proportion of good fibers $\pi_{\good}(i)$ and the maximal
biases at $n \in \cN_{i+1}$.
\begin{lemma}\label{bias_stat_growth_lemma}
 Let $i \geq 1$.  For each $k \geq 1$ we have the bound
 \[
  \beta_k^k(i+1) \leq \frac{\beta_k^k(i)}{\pi_{\good}(i)} \left(1 + \sum_{n \in
\cN_{i+1}} \frac{\ell_k(n) b_n}{n}\right).
 \]
\end{lemma}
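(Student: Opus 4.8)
The plan is to expand $\beta_k^k(i+1) = \sum_{m \mid Q_{i+1}} \ell_k(m)\max_{b \bmod m}\mu_{i+1}(b\bmod m)$ and exploit the unique factorization $m = m_0 n$ with $m_0 \mid Q_i$ and $n \mid Q_{i+1}/Q_i$; by the definition of $\cN_{i+1}$ the latter condition means $n \in \cN_{i+1} \cup \{1\}$. Since $\gcd(m_0,n) = 1$ and $\ell_k$ is multiplicative we have $\ell_k(m) = \ell_k(m_0)\ell_k(n)$, so the whole sum will factor through an $m_0$-sum and an $n$-sum once we control $\max_b \mu_{i+1}(b \bmod m)$ in a correspondingly multiplicative way.

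The key step is the fiberwise inequality
\[
  \max_{b \bmod m}\mu_{i+1}(b\bmod m) \;\le\; \frac{b_n}{n\,\pi_{\good}(i)}\max_{b_0 \bmod m_0}\mu_i(b_0 \bmod m_0),
\]
with the convention $b_1 = 1$. To prove it I would fix a class $(b\bmod m)$, record its reductions $b_0 = b \bmod m_0$ and $b' = b \bmod n$, and partition $(b \bmod m)$, as a subset of $\bZ/Q_{i+1}\bZ$, over the fibers $(r_0 \bmod Q_i)$ with $r_0$ ranging over those elements of $R_i^*$ with $r_0 \equiv b_0 \bmod m_0$. On each such fiber $\mu_{i+1}$ is, by its definition, the uniform measure on $R_{i+1}\cap(r_0\bmod Q_i)$ scaled to total mass $\mu_i(r_0)/\pi_{\good}(i)$; hence the $\mu_{i+1}$-mass it puts on $(b\bmod m)$ inside this fiber is $\frac{\mu_i(r_0)}{\pi_{\good}(i)}$ times the proportion of $R_{i+1}\cap(r_0\bmod Q_i)$ lying in $(b'\bmod n)$, which is $\le b_n/n$ by the definition of the maximum bias $b_n$ (and is exactly $1$ when $n = 1$). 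Summing over the relevant $r_0$ and using
\[
  \sum_{\substack{r_0 \in R_i^*\\ r_0 \equiv b_0 \bmod m_0}}\mu_i(r_0) \;=\; \mu_i\big((b_0 \bmod m_0)\cap R_i^*\big)\;\le\;\mu_i(b_0 \bmod m_0),
\]
which is legitimate because $R_i^* \subseteq R_{i-1}^*\cap R_i$ is contained in the support of $\mu_i$, yields the displayed bound after maximizing first over $b$ and then over $b_0$. I also use here that $(r_0 \bmod Q_i) \subseteq R_i^*$ for $r_0 \in R_i^*$, so that $R_{i+1}\cap(r_0 \bmod Q_i)$ is genuinely the fiber over which $\mu_{i+1}$ is constructed.

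With this inequality in hand the rest is bookkeeping: substituting it into the expansion of $\beta_k^k(i+1)$ and separating variables gives
\[
  \beta_k^k(i+1) \;\le\; \frac{1}{\pi_{\good}(i)}\bigg(\sum_{m_0 \mid Q_i}\ell_k(m_0)\max_{b_0 \bmod m_0}\mu_i(b_0\bmod m_0)\bigg)\bigg(\sum_{n}\frac{\ell_k(n)\,b_n}{n}\bigg),
\]
the inner $n$-sum running over divisors of $Q_{i+1}$ all of whose prime factors lie in $(P_i,P_{i+1}]$, i.e. over $\cN_{i+1}\cup\{1\}$. The first parenthesis is exactly $\beta_k^k(i)$ by definition, and, since $\ell_k(1) = b_1 = 1$, the $n = 1$ term contributes $1$ to the second parenthesis, which therefore equals $1 + \sum_{n \in \cN_{i+1}} \ell_k(n)b_n/n$. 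This is the claimed bound.

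I do not anticipate a genuine obstacle here; the only point needing care is the measure-theoretic bookkeeping with the nested sets $R_i^* \subseteq R_{i-1}^*\cap R_i$ and the fibered structure of $\mu_{i+1}$ — in particular, making sure the normalizing factor $1/\pi_{\good}(i)$ is accounted for exactly once per fiber, and that the degenerate fiber $n = 1$ is folded correctly into the final product.
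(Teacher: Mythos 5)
Your proof is correct, and it supplies a complete argument where the paper itself offers none: the paper's ``proof'' is a one-line citation to Proposition 3 of \cite{H15}, with the remark elsewhere that the $b_n$ factors are only implicit there. Your derivation is precisely what ``tracing'' that reference with the $b_n$ made explicit amounts to. The decomposition $m = m_0 n$ with $m_0 \mid Q_i$, $n \mid Q_{i+1}/Q_i$ (so $n \in \cN_{i+1} \cup \{1\}$), the multiplicativity of $\ell_k$ across coprime parts, and the fiberwise bound
\[
\max_{b \bmod m}\mu_{i+1}(b\bmod m) \;\le\; \frac{b_n}{n\,\pi_{\good}(i)}\max_{b_0 \bmod m_0}\mu_i(b_0 \bmod m_0),
\]
obtained by decomposing over good fibers $(r_0 \bmod Q_i)$ with $r_0 \equiv b_0 \bmod m_0$, using that $\mu_{i+1}$ is the uniform measure on $R_{i+1}\cap (r_0\bmod Q_i)$ scaled to mass $\mu_i(r_0)/\pi_{\good}(i)$, and invoking the definition of $b_n$, are all exactly right. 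The final factorization of the sum over $m \mid Q_{i+1}$ into the $m_0$-sum (giving $\beta_k^k(i)$) and the $n$-sum (giving $1 + \sum_{n \in \cN_{i+1}} \ell_k(n)b_n/n$, with the $n=1$ term contributing $1$) is clean. The only stylistic wrinkle is the assertion ``$(r_0 \bmod Q_i) \subseteq R_i^*$ for $r_0 \in R_i^*$,'' which is a tautology once one regards $R_i^*$ as a union of residue classes modulo $Q_i$, but it is used correctly to justify $\mu_i\big((b_0\bmod m_0)\cap R_i^*\big)\le \mu_i(b_0\bmod m_0)$.
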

\begin{proof}
 This follows by tracing the proof of Proposition 3 of \cite{H15}.
\end{proof}

We now turn to giving a detailed account of Theorem \ref{LLL_type_theorem}.

\section{The Local Lemma and good fibers}\label{LLL_section}
Our Theorem \ref{LLL_type_theorem}, which is used to estimate the density of
good fibers, is derived from the following improved version of the Lov\'{a}sz
Local Lemma due to \cite{BFPS11}, see also \cite{ScSo05}.

\begin{theorem}[Clique Lov\'{a}sz Local Lemma]\label{sharp_lovasz_theorem}
Suppose that $G = (V,E)$ is a dependency graph for family of events
$\{A_v\}_{v \in V}$, each with probability $\Prob(A_v) \leq \pi_v$.  Let $N_v$
be
the neighborhood of $v \in V$. Suppose that
there exists sequence $\umu = \{\mu_v\}_{v \in V}$ of reals in $[0,\infty)$ such
that, for each $v \in V$,
\begin{equation}\label{lovasz_condition}
 \mu_v \geq \pi_v \phi_v(\umu)
\end{equation}
where
\[
 \phi_v(\umu) = \sum_{\substack{ R \subset \{v\} \cup N_v\\ R \text{ indep.
in } G}} \prod_{v' \in R} \mu_{v'}.
\]

Then
\begin{equation}\label{typical_conclusion}
 \Prob\left(\bigcap_{v \in V} A_v^c\right) \geq \exp\left(-\sum_{v \in
V} \mu_v\right)
\end{equation}
and, for all $U \subset V$,
\begin{equation}\label{relative_version}
 \frac{\Prob\left(\bigcap_{v \in V} A_v^c\right)}{\Prob\left(\bigcap_{u \in U}
A_u^c\right)} \geq \exp\left(-\sum_{v \in V \setminus U} \mu_v \right).
\end{equation}
\end{theorem}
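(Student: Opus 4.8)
The plan is to run the conditional-probability induction behind the Lov\'{a}sz Local Lemma, in the sharpened bookkeeping of \cite{BFPS11} (see also \cite{ScSo05}). The key is to prove, by induction on $|S|$, the estimate
\[
\Prob\Big(A_v \,\Big|\, \bigcap_{s\in S}A_s^c\Big)\;\le\;1-\frac{1}{1+\mu_v},\qquad v\in V,\ S\subseteq V\setminus\{v\},
\]
where, as part of the induction, one also records that each conditioning event $\bigcap_{s\in S}A_s^c$ has positive probability. Granting this, \eqref{typical_conclusion} and \eqref{relative_version} follow at once: enumerating $V=\{v_1,\dots,v_m\}$ and using the chain rule together with $1+t\le e^t$,
\[
\Prob\Big(\bigcap_{i=1}^m A_{v_i}^c\Big)=\prod_{i=1}^m\Prob\Big(A_{v_i}^c\,\Big|\,\bigcap_{j<i}A_{v_j}^c\Big)\;\ge\;\prod_{i=1}^m\frac{1}{1+\mu_{v_i}}\;\ge\;\exp\Big(-\sum_{v\in V}\mu_v\Big);
\]
for \eqref{relative_version} one orders $V$ so that $U$ is an initial segment and keeps only the factors indexed by $V\setminus U$, noting that $\Prob(\bigcap_{u\in U}A_u^c)>0$ by \eqref{typical_conclusion} applied to the subfamily on $U$ (whose hypothesis \eqref{lovasz_condition} is inherited, since passing to the induced subgraph only removes independent sets from the sums defining the $\phi_v$).

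The base case, and more generally the case $S\cap N_v=\emptyset$, is routine: then $A_v$ is independent of $\sigma(\{A_s:s\in S\})$, so the conditional probability equals $\Prob(A_v)\le\pi_v$, and since $\emptyset$ and $\{v\}$ are independent subsets of $\{v\}\cup N_v$ we have $\phi_v(\umu)\ge 1+\mu_v$, whence $\pi_v\le\mu_v/(1+\mu_v)=1-1/(1+\mu_v)$ by \eqref{lovasz_condition}. If instead $S_1:=S\cap N_v\ne\emptyset$, set $S_2:=S\setminus N_v$ and apply Bayes' rule:
\[
\Prob\Big(A_v\,\Big|\,\bigcap_{s\in S}A_s^c\Big)=\frac{\Prob\big(A_v\cap\bigcap_{s\in S_1}A_s^c\,\big|\,\bigcap_{s\in S_2}A_s^c\big)}{\Prob\big(\bigcap_{s\in S_1}A_s^c\,\big|\,\bigcap_{s\in S_2}A_s^c\big)}=:\frac{\mathcal N}{\mathcal D}.
\]
The numerator is controlled for free, $\mathcal N\le\Prob\big(A_v\,\big|\,\bigcap_{s\in S_2}A_s^c\big)=\Prob(A_v)\le\pi_v$, by independence of $A_v$ from the non-neighbours $S_2$, so everything reduces to a lower bound for $\mathcal D$.

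I expect the denominator bound to be the main obstacle. The classical local lemma peels the vertices of $S_1$ off one at a time, losing a factor $1/(1+\mu_s)$ each time and hence only giving $\mathcal D\ge\prod_{s\in S_1}(1+\mu_s)^{-1}$; this is too weak, as it would force the full product $\prod_{w\in N_v}(1+\mu_w)$ into the hypothesis in place of the smaller $\phi_v(\umu)$. The improvement of \cite{BFPS11} is to organise this peeling so that the surviving terms correspond only to \emph{independent} subsets of $N_v$, yielding $\mathcal D\ge (1+\mu_v)/\phi_v(\umu)$; combined with $\mu_v\ge\pi_v\,\phi_v(\umu)$ this gives $\mathcal N/\mathcal D\le \pi_v\,\phi_v(\umu)/(1+\mu_v)\le \mu_v/(1+\mu_v)=1-1/(1+\mu_v)$, closing the induction. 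Making this combinatorial bookkeeping precise — tracking how the clique structure of $N_v$ kills the non-independent terms — is exactly the content of \cite{BFPS11}, and is equivalent to convergence of the cluster expansion of the hard-core lattice gas with activities $\umu$ under \eqref{lovasz_condition}. One could alternatively route the same algebraic fact through Shearer's inequality: prove by induction on $|T|$, via the deletion–contraction identity $q_T(\upi)=q_{T\setminus\{w\}}(\upi)-\pi_w\,q_{T\setminus(\{w\}\cup N_w)}(\upi)$ for the independent-set polynomials $q_T(\upi)=\sum_{R\subseteq T\text{ indep}}\prod_{w\in R}(-\pi_w)$, that \eqref{lovasz_condition} forces $q_T(\upi)\ge\prod_{w\in T}(1+\mu_w)^{-1}>0$ for all $T\subseteq V$, and then invoke Shearer's bound in the ratio form $\Prob(\bigcap_{v\in T_2}A_v^c)/\Prob(\bigcap_{v\in T_1}A_v^c)\ge q_{T_2}(\upi)/q_{T_1}(\upi)$ for $T_1\subseteq T_2$; the inductive step of that estimate is the algebraic heart and matches the denominator bound above.
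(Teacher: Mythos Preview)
Your proposal is correct and, like the paper, ultimately defers the substantive work to \cite{BFPS11}; the difference is only in packaging. The paper does not re-run the conditional-probability induction at all: it simply quotes from \cite{BFPS11} the product bound
\[
\Prob\Big(\bigcap_{v\in V}A_v^c\Big)\ \ge\ \prod_{v\in V}(1-\pi_v)^{\phi_v(\umu)-\mu_v}
\]
(and its relative version), and then converts each factor to $e^{-\mu_v}$ via the one-line chain $\phi_v-\mu_v\le(1-\pi_v)\phi_v$ and $(1-\pi_v)^{(1-\pi_v)\phi_v}\ge e^{-\pi_v\phi_v}\ge e^{-\mu_v}$. You instead sketch the induction itself and pass through the intermediate bound $\prod_v(1+\mu_v)^{-1}$, converting via $1+t\le e^t$. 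Both routes are sound; the paper's is shorter because it treats \cite{BFPS11} as a black box, while yours is more self-contained and makes explicit where the independent-set restriction in $\phi_v$ enters. Your alternative route through Shearer's deletion--contraction identity is also legitimate and closer in spirit to what the paper does later in Appendix~\ref{shearer_appendix}.
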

\begin{remark}
 In the definition of $\phi_v$, $R = \emptyset$ is to be included, with
associated product equal to 1.
\end{remark}
\begin{proof}
This theorem with conclusion
\begin{equation}\label{typical_conclusion_paper}
 \Prob\left(\bigcap_{v \in V} A_v^c\right) \geq \prod_{v  \in V}
(1-\pi_v)^{\phi_v(\umu)- \mu_v}
\end{equation}
is proven in \cite{BFPS11}, and the corresponding relative conclusion
\begin{equation}\label{relative_version_paper}
 \frac{\Prob\left(\bigcap_{v \in V} A_v^c\right)}{\Prob\left(\bigcap_{u \in U}
A_u^c\right)} \geq \prod_{v \in V\setminus U} (1-\pi_v)^{\phi_v(\umu)-\mu_v}
\end{equation}
follows directly from the argument there.  To deduce (\ref{typical_conclusion})
and (\ref{relative_version}), observe that \[\phi_v(\umu) -\mu_v \leq \left(1 -
\pi_v\right) \phi_v(\umu),\] so that
\[
 (1-\pi_v)^{\phi_v(\umu)-\mu_v} \geq \exp\left(\phi_v(\umu) (1-\pi_v)
\log(1-\pi_v)\right)
\geq \exp( - \phi_v(\umu) \pi_v) \geq \exp(-\mu_v).
\]

\end{proof}

Recall that Theorem \ref{LLL_type_theorem} applies in the context of a
congruence system to moduli in a set $\cN$, whose prime factors lie in a set
$\cP$.  Each modulus $n \in \cN$ has a set of residues $\ba_n$, considered to
be a probabilistic event with probability $\frac{|\ba_n|}{n}$.  We require a
system of non-negative weights $\{x_p\}_{p \in \cP}$ satisfying
\[
 x_p \geq \sum_{n \in \cN: p|n} \frac{|\ba_n \bmod n| \prod_{p'|n}(1 +
x_{p'})}{n}
\]
and the conclusion is that the uncovered set $R$ has density at least
\[
 \Prob(R) \geq \exp\left(-\sum_{n \in \cN} \frac{|\ba_n \bmod n| \prod_{p|n}(1
+ x_p)}{n}\right)
\]
and that, for any $n \in \cN$, for any $b \bmod n$,
\[
 \frac{\Prob(R \cap (b\bmod n))}{\Prob(R)} \leq \frac{\exp\left(\sum_{p|n}
x_p\right)}{n}.
\]

\begin{proof}[Deduction of Theorem \ref{LLL_type_theorem}]
To deduce Theorem \ref{LLL_type_theorem} from Theorem
\ref{sharp_lovasz_theorem} we take $V$ to be the set of non-trivial square-free
products of primes in $\cP$,
\[
 V = \{v>1, \text{square-free}, p|v \Rightarrow p \in \cP\}.
\]
The event associated to $v \in V$ is the union of congruences $(\ba_n \bmod n)$
for which $\sqf(n) = v$, and this event has probability
\[
 \pi_v = \sum_{n : \sqf(n) = v} \frac{|\ba_n|}{n}.
\]
The dependency graph connects $v_1$ and $v_2$ if and only if $\GCD(v_1,
v_2)>1$.

We take the weight $\mu_v$ to be multiplicative, $\mu_v = \pi_v \prod_{p|v} (1
+ x_p)$.
This has the effect of reducing
(\ref{lovasz_condition}) at $v$ to the constraint
\begin{equation}\label{reduced_constraint}
 \prod_{p|v} (1 + x_p) \geq \phi_v(\umu).
\end{equation}
Notice that
\[
 \phi_v(\umu) = \sum_{\substack{R \subset \{v\} \cup N_v\\ \text{independent}}}
\prod_{v' \in R} \mu_{v'} \leq \prod_{p |v}\left(1 + \sum_{v' : p|v'}
\mu_{v'}\right)
\]
since each term in the sum on the left appears in the expansion of the product
on the right.  Thus if we make the condition that for each $p|Q'$,
\[
 x_p \geq \sum_{v': p|v'} \mu_{v'},
\]
which is the condition (\ref{reduced_constraint}) in the case
$v=p$,
then (\ref{reduced_constraint}) holds automatically for all $v$. In this way we
have reduced to guaranteeing the system of prime constraints
\begin{equation}\label{prime_constraints}
 \forall p|Q', \qquad x_p \geq \sum_{v': p|v'} \pi_{v'} \prod_{p':
p'|v'}(1 + x_{p'}),
\end{equation}
which is the constraint of Theorem \ref{LLL_type_theorem}.

The first conclusion, (\ref{typical_conclusion}) of Theorem
\ref{sharp_lovasz_theorem} now gives that
\begin{align*}
 \Prob\left(\bigcap_{n \in \cN} (\ba_n \bmod n)^c\right) &\geq
\exp\left(-\sum_{v \in V} \pi_v \prod_{p|v}(1 + x_p)\right)\\& =
\exp\left(-\sum_{n \in \cN} \frac{|\ba_n \bmod n| \prod_{p |n}(1 +
x_p)}{n}\right),
\end{align*}
which is the first conclusion of Theorem \ref{LLL_type_theorem}.  To get the
second conclusion, use
\begin{align*}
 \frac{\Prob(R \cap (b \bmod n))}{\Prob(R)} &\leq \frac{\Prob\left((b
\bmod n) \cap \bigcap_{n' \in \cN, (n,n')=1} (\ba_{n'} \bmod
n')^c\right)}{\Prob\left(  \bigcap_{n' \in \cN} (\ba_{n'} \bmod
n')^c\right)}\\ & = \frac{1}{n} \frac{\Prob\left( \bigcap_{n' \in \cN, (n,n')=1}
(\ba_{n'} \bmod
n')^c\right)}{\Prob\left(  \bigcap_{n' \in \cN} (\ba_{n'} \bmod
n')^c\right)}\\ & \leq \frac{1}{n} \exp\left( \sum_{n' \in \cN: (n',n)>1}
\frac{|\ba_{n'} \bmod n'| \prod_{p|n'}(1 + x_p)}{n'}\right).
\end{align*}
The last term is bounded by
\[
 \frac{1}{n} \exp\left(\sum_{p|n} \sum_{n': p|n'} \frac{|\ba_{n'} \bmod n'|
\prod_{p'|n'} (1 + x_{p'})}{n'}\right) \leq \frac{1}{n} \exp\left(\sum_{p|n}
x_p\right).
\]
\end{proof}

We now give a sufficient criterion to guarantee a good solution to the fixed
point equation governing existence of weights in Theorem \ref{LLL_type_theorem}.
Recall that we define
\[
 G_p(\ux) = \sum_{n\in \cN:p|n} \frac{|\ba_n \bmod n| \prod_{p'|n}(1 +
x_{p'})}{n}.
\]
A
trivial lower bound for a fixed point $\uG(\ux^{\fix}) = \ux^{\fix}$  is
\[
 \ux^0, \qquad x_p^0 = \frac{G_p(\uO)}{1-G_p(\uO)},
\]
and we wish to say that a fixed point lies near $\ux^0$.
The $n$th derivative $D^n \uG(\uO)$ is a multilinear map $\bigotimes^n
\ell^2(\cP) \to \ell^2(\cP)$.  Give it the usual operator norm,
\[
 \|D^n \uG(\uO)\|_{\op} = \sup_{\|v_1\|_{\ell^2}=...= \|v_n\|_{\ell^2} = 1}
\|D^n\uG(\uO)(v_1, ..., v_n)\|_{\ell^2}.
\]
The following theorem guarantees  that there
exists such a fixed point $\ux^{\fix}$ close to $\ux^0$ when there is good
control of
the operator norms of the derivatives of $D^n(\uG)(\uO)$ of $\uG$ at $\uO$. The theorem was motivated by the series of approximations made in Newton's method.

\begin{theorem}\label{finding_lovasz_theorem}
 With the notation as above, let $M>0$ be a parameter.  Assume that
 \[
  B_\infty = \|\uG(\uO)\|_{\ell^\infty} <1,
 \]
 and set $B_{2,0} = \|\ux^0\|_{\ell^2}$ and
 \[
  B_{\op}(M) = \|D\uG(\uO) - \diag(D\uG(\uO))\|_{\op} + \sum_{n=2}^\infty
\frac{M^{n-1}}{(n-1)!} \|D^n\uG(\uO)\|_{\op}< \infty.
 \]
Suppose that $\theta = \frac{B_{\op}}{1-B_{\infty}}<1$ and that
$\frac{B_{2,0}}{1-\theta} \leq M$.  Then there exists  $\ux^{\fix} = \ux^0 +
\ueps$,
$\ueps
\geq \uO$ solving the fixed point equation $\uG(\ux^{\fix}) = \ux^{\fix}$, such
that
\[
 \|\ueps\|_{\ell^2} \leq \frac{B_{2,0} \theta}{1-\theta}.
\]

\end{theorem}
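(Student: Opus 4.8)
The plan is to recast the fixed-point equation $\uG(\ux^{\fix}) = \ux^{\fix}$ as a fixed-point problem for the error $\ueps = \ux^{\fix} - \ux^0$ and to solve the latter by monotone iteration starting from $\uO$. First I would record the structural facts about $\uG$ that drive everything: each $G_p$ is (a finite sum of monomials, hence) a power series in $\ux$ with non-negative coefficients, so $\uG$ and every derivative $D^n\uG(\uO)$ are entrywise non-negative and $\uG$ is monotone for the coordinatewise order; moreover a direct computation from the formula for $G_p$ shows $\partial_{x_p}G_p(\uO) = G_p(\uO)$, so the diagonal of $D\uG(\uO)$ equals $\uG(\uO)$ and $\|\diag(D\uG(\uO))\|_{\op} = B_\infty$. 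The choice $x_p^0 = \frac{G_p(\uO)}{1 - G_p(\uO)}$ is then explained by the identity $\ux^0 = \uG(\uO) + \diag(D\uG(\uO))\,\ux^0$ (valid since $G_p(\uO) \leq B_\infty < 1$): that is, $\ux^0$ is the exact solution of the ``diagonalized'' fixed-point equation.

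Set $r = \frac{B_{2,0}\theta}{1-\theta}$ and $\Phi(\ueps) = \uG(\ux^0 + \ueps) - \uG(\uO) - \diag(D\uG(\uO))\,\ux^0$. By the displayed identity for $\ux^0$, a point $\ueps \geq \uO$ is a fixed point of $\Phi$ exactly when $\ux^0 + \ueps$ is a fixed point of $\uG$; also $\Phi$ is monotone, and $\Phi(\uO) \geq \uO$ because the off-diagonal part of $D\uG(\uO)$ and the terms $D^n\uG(\uO)$, $n \geq 2$, contribute non-negatively when evaluated at $\ux^0 \geq \uO$. The key estimate is that
\[
 \|\ueps\|_{\ell^2} \leq r \quad\Longrightarrow\quad \|\Phi(\ueps)\|_{\ell^2} \leq (B_\infty + B_{\op})\|\ueps\|_{\ell^2} + B_{2,0}B_{\op}.
\]
To prove it I would Taylor-expand $\uG$ about $\uO$, note that $\|\ux^0 + \ueps\|_{\ell^2} \leq B_{2,0} + r = \frac{B_{2,0}}{1-\theta} \leq M$ (this is exactly what the hypothesis $\frac{B_{2,0}}{1-\theta}\leq M$ buys), split off $\diag(D\uG(\uO))$ — whose contribution cancels $\diag(D\uG(\uO))\,\ux^0$ up to a term $\diag(D\uG(\uO))\,\ueps$ of operator norm $B_\infty$ — and bound the off-diagonal linear term together with all higher-order terms by $\|\ux^0+\ueps\|_{\ell^2}\cdot B_{\op}(M)$, using $\|\ux^0+\ueps\|_{\ell^2}^{\,n-1} \leq M^{n-1}$ and $\frac{1}{n!}\leq\frac{1}{(n-1)!}$ to match the definition of $B_{\op}(M)$ term by term, and then $\|\ux^0+\ueps\|_{\ell^2} \leq B_{2,0} + \|\ueps\|_{\ell^2}$.

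With the estimate in hand I would iterate $\ueps^{(0)} = \uO$, $\ueps^{(k+1)} = \Phi(\ueps^{(k)})$. Monotonicity of $\Phi$ and $\Phi(\uO) \geq \uO$ make the sequence coordinatewise increasing, and the affine estimate together with $B_{\op} = \theta(1-B_\infty)$ (so that $B_\infty + B_{\op} = \theta + B_\infty(1-\theta) < 1$ and $1 - (B_\infty + B_{\op}) = (1-\theta)(1-B_\infty)$) gives by induction
\[
 \|\ueps^{(k)}\|_{\ell^2} \leq B_{2,0}B_{\op}\sum_{j=0}^{k-1}(B_\infty + B_{\op})^j \leq \frac{B_{2,0}B_{\op}}{(1-\theta)(1-B_\infty)} = \frac{B_{2,0}\theta}{1-\theta} = r,
\]
so the iteration never leaves the ball where the estimate is valid. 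Passing to the coordinatewise limit $\ueps$ (which exists since each coordinate increases and is bounded), Fatou gives $\|\ueps\|_{\ell^2} \leq r$, monotone convergence of the power series $G_p$ gives $\Phi(\ueps) = \ueps$, and $\ueps \geq \uO$; hence $\ux^{\fix} = \ux^0 + \ueps$ is the asserted fixed point. The only genuinely delicate point is the key estimate — one must stay inside the $\ell^2$-ball of radius $M$ and organize the Taylor remainder so that it is dominated by $B_{\op}(M)$ rather than by a coarser quantity — while everything else is routine monotone-iteration bookkeeping (and, in the intended applications where $\cP$ is finite, $\uG$ is literally a polynomial, so there are no analytic subtleties at all).
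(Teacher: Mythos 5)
Your proof is correct, but it takes a genuinely different route from the paper's. The paper runs a Jacobi-preconditioned Newton iteration
$\ux^{i+1} = \ux^i + (I-\cD)^{-1}\bigl(\uG(\ux^i) - \ux^i\bigr)$
(with $\cD = \diag(D\uG(\uO))$) and proves that the \emph{increments} contract geometrically with ratio $\theta$, via a Taylor-within-Taylor manipulation: expanding $F(\ux^i)$ and then re-expanding $F(\ux^{i-1})$ inside it to cancel the low-order terms, leaving only the off-diagonal and higher-order pieces. You instead notice that $\ux^0$ solves the diagonalized equation $\ux^0 = \uG(\uO) + \cD\ux^0$, so that a fixed point of $\Phi(\ueps) = \uG(\ux^0+\ueps) - \ux^0$ is a fixed point of $\uG$, and you run plain Picard iteration $\ueps^{(k+1)} = \Phi(\ueps^{(k)})$ from $\uO$, proving the \emph{iterates} stay in the ball of radius $r = \frac{B_{2,0}\theta}{1-\theta}$ via a single affine self-bound $\|\Phi(\ueps)\| \leq (B_\infty + B_{\op})\|\ueps\| + B_{2,0}B_{\op}$. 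Your identity $1 - (B_\infty + B_{\op}) = (1-B_\infty)(1-\theta)$ makes the geometric sum close up to exactly $r$, and since $B_{2,0} + r = \frac{B_{2,0}}{1-\theta} \leq M$ the Taylor-based bound is valid at every step. Both proofs lean on the same three norms $B_\infty$, $B_{2,0}$, $B_{\op}(M)$, the same use of monotonicity/positivity of the power series, and the same pointwise limit argument; the difference is purely in the choice of iteration and whether one controls increments or iterates. What your route buys is that it is slightly more elementary — no preconditioner, no nested Taylor expansion — while what the paper's route buys is a direct geometric-decay statement on successive differences (natural if one thinks of the scheme as Newton's method, as the authors explicitly say motivated their construction). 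The numerical constants and the final estimate are identical either way.
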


\begin{proof}
 Let $F(\ux) = \uG(\ux) - \ux$ so that we seek to solve $F(\ux^{\fix}) = \uO$.
This
we
can attempt via `Newton's method'.

Let \[\cD = \diag(D\uG(\uO)) =
\diag(G_p(\uO)),
\qquad \cOD = D\uG(\uO) - \diag(D\uG(\uO)). \]
Starting from the initial guess $\ux^0$ as above, set $\ux^{i+1} = \ux^{i} +
(I-\cD)^{-1} F(\ux^{i})$.  We may also set $\ux^{-1} = \uO$, which is
consistent with this definition. A moment's thought shows that the sequence
$\ux^i$ is increasing, so that if it is bounded it converges to the desired
fixed point, and $\ueps = \sum_{i = 0}^\infty (\ux^{i+1} - \ux^i)$. Plainly
$\|(I-\cD)^{-1}\|_{\op} = \frac{1}{1-B_\infty}$, so that
\[
 \|\ux^{i+1} - \ux^i\|_{\ell^2} \leq
\frac{1}{1-B_{\infty}}\|F(\ux^i)\|_{\ell^2}.
\]
Note that $F$ is a polynomial.  Thus
\[
 F(\ux^i) = \sum_{k=0}^\infty \frac{D^k F(\uO)(\ux^i, ..., \ux^i)}{k!}.
\]
In this sum, write $DF(\uO) = (\cD - I) + \cOD$, and recall that $\ux^i
=\ux^{i-1}
+  (I-\cD)^{-1} F(\ux^{i-1})$, so that
\[
 DF(\uO) \ux^i = \cOD \ux^i + (\cD-I) \ux^{i-1} - F(\ux^{i-1}).
\]
On Taylor expanding $F(\ux^{i-1})$
we find
\begin{equation}\label{Taylor_F}
 F(\ux^i) = \cOD (\ux^i - \ux^{i-1}) + \sum_{k=2}^\infty \frac{1}{k!} (D^k
F(\uO)(\ux^i, ..., \ux^i) - D^k F(\uO)(\ux^{i-1}, ..., \ux^{i-1})).
\end{equation}
Now we impose the constraint $\|\ux^{j}\|_{\ell^2} \leq M$, which holds for
$j = 1$, and which we will verify for all $j$ by induction.  With this
assumption, by the usual trick with the triangle inequality in which we change one coordinate at a time,
\[
 \|D^k
F(\uO)(\ux^i, ..., \ux^i) - D^k F(\uO)(\ux^{i-1}, ..., \ux^{i-1})\|_{\ell^2}
\leq
k M^{k-1} \|D^k F(\uO)\|_{\op} \|\ux^{i} - \ux^{i-1}\|_{\ell^2},
\]
so that $|F(\ux^i)\|_{\ell^2} \leq B_{\op} \|\ux^i - \ux^{i-1}\|$ and
\[
 \|\ux^{i+1} - \ux^{i}\|_{\ell^2} \leq \frac{B_{\op}}{1-B_\infty}
\|\ux^i-\ux^{i-1}\|_{\ell^2} = \theta \|\ux^i-\ux^{i-1}\|_{\ell^2}.
\]
Since
\[
 \|\ux^0 - \ux^{-1}\|_{\ell^2} = \|\ux^0\|_{\ell^2} = B_{2,0},
\]
we have $\|\ux^i\| \leq \frac{B_{2,0}}{1-\theta} \leq M$ for all $i$, which
verifies the condition above.  It follows that $\|\ueps\|_{\ell^2} \leq
\frac{B_{2,0} \theta}{1-\theta}$.
\end{proof}
\subsection{Random Lov\'{a}sz weights}
For each $i = 1, 2, ...$  on (a subset of good) fibers above $R_{i-1}^* \cap
R_i$ we apply Theorem \ref{LLL_type_theorem} with moduli $\cN = \cN_{i+1}$ and
residues $\ba_n = \ba_{n,r}$.  Thus we think of the quantities from Theorems
\ref{LLL_type_theorem} and \ref{finding_lovasz_theorem} as depending upon the
random variable $r$, e.g. $\uG(\uO) = \uG(\uO,r)$, $B_\infty = B_{\infty, r}$,
$\ux^{0} = \ux^{0}(r)$.  We wish to understand properties of
the distribution of $\ux^{\fix}(r)$, but will instead define good fibers in
terms of $\ell^p$ control of $\uG(\uO,r)$, and control of $B_{\op}(M,r)$, the
other quantities of interest being controlled in terms of these. We now work to
control $B_{\op}$.

We
directly verify that the partial derivatives of $D^kG$ are given by
\[
 D_{p_1}...D_{p_k} G_p = \left\{\begin{array}{lll}
\displaystyle\sum\limits_{\substack{n \in \cN\\ p, p_1...p_k |k}} \frac{|\ba_n
\bmod n|}{n}
\displaystyle\prod\limits_{\substack{p'|n\\ p' \not \in \{p_1,...,p_k\}}} (1 +
x_{p'})
&&\text{if } p_1, ..., p_k \text{ distinct}\\\\ 0 &&
\text{otherwise}\end{array}\right..
\]

A simple bound for the operator norm of $D^k\uG(\uO)$ is
\[
 \|D^k\uG(\uO)\|_{\op} \leq \|D^k \uG(\uO)\|_{\ell^2} =
\left(\sum_{\substack{p_1, ..., p_k\\ \text{distinct}}}
\|D_{p_1}...D_{p_k}\uG(\uO)\|_{\ell^2}^2\right)^{\frac{1}{2}},
\]
which, in view of the evaluation of $D^kG$, is given by, for $k \geq 2$,
\begin{align*}
&\|D^k\uG(\uO)\|_{\op}^2 \leq  S_k \\ & S_k := \sum_{p_1, ..., p_k \text{
distinct}}\left( k \left(\sum_{\substack{n \in \cN\\
p_1...p_k|n}} \frac{|\ba_n \bmod n|}{n}  \right)^2  + \sum_{p \not
\in \{p_1, ..., p_k\}} \left(\sum_{\substack{n \in \cN\\ pp_1...p_k
|n}}\frac{|\ba_n \bmod n|}{n} \right)^2\right).
\end{align*}

In $\cOD = D\uG(\uO) - \diag(D\uG(\uO))$ the diagonal terms $p = p_1$
are missing, so that we recover the bound
\begin{align*}
 \|\cOD\|_{\op}^2 &\leq \sum_{p \neq p_1} \left(\sum_{\substack{n \in \cN\\
pp_1|n}} \frac{|\ba_n \bmod
n|}{n}\right)^2 =: S_1.
\end{align*}

By Cauchy-Schwarz, for positive weights $W_1, W_2,
W_3,...$
\begin{align*}
 B_{\op}(M)^2 \leq \left( \sum_{k=1}^\infty
\frac{W_k M^{k-1}}{ (k-1)!}
\right)\left(\sum_{k=1}^\infty \frac{ M^{k-1}}{(k-1)!} \frac{S_k}{W_k}
\right).
\end{align*}
Let $\min(\cP) \geq P+1$. We choose
\[
 W_1^2 = \frac{1}{(P\log P)^2}, \qquad \forall
k \geq 2, \quad W_k^2 =  \frac{k}{(P \log
P)^k},
\]
from which it follows

\begin{align}\notag
 B_{\op}(M)^2  &\leq \left(\frac{1}{P \log P} + \sum_{k=2}^\infty
\frac{k^{\frac{1}{2}}
M^{k-1}}{(k-1)! ( P \log P)^{\frac{k}{2}}}
\right) \\\notag &  \times
\left((P \log P) S_1 +
\sum_{k=2}^\infty \frac{ M^{k-1} ( P \log
P)^{\frac{k}{2}}}{k^{\frac{1}{2}}(k-1)!} S_k\right)
\\& =: \cC \times \cS. \label{B_op_Holder}
\end{align}

We  record bounds for $\|\uG(\uO)\|_{\ell^2}$ and $S_1, S_2, ...$
etc averaged over $r \in R_{i-1}^* \cap R_i$.
\begin{lemma}\label{expectation_lemma}
 Let $i \geq 1$.  For $r \in R_{i-1}^* \cap R_i$ consider $\cN = \cN_{i+1}$ and
$\ba_n = \ba_{n,r}$ as in the discussion above. We have the
following bounds.
\begin{align*}
\E_{r \in R_{i-1}^* \cap R_i}
\|\uG(\uO,r)\|_{\ell^2}^2  &\leq \beta_2^2(i) \prod_{P_i \leq p < P_{i+1}}
\left(1 + \frac{1}{p-1}\right)^2\sum_{P_i \leq p < P_{i+1}} \frac{1}{(p-1)^2} \\
\E_{r \in R_{i-1}^* \cap R_i} \|\uG(\uO,r)\|_{\ell^3}^3 & \leq \beta_3^3(i)
\prod_{P_i \leq p < P_{i+1}}
\left(1 + \frac{1}{p-1}\right)^3 \sum_{P_i \leq p < P_{i+1}} \frac{1}{(p-1)^3}\\
 \E_{r \in R_{i-1}^* \cap R_i}
S_1(r) & \leq \beta_2^2(i) \prod_{P_i \leq p< P_{i+1}}\left(1 + \frac{1}{p
-1}\right)^2 \left(\sum_{P_i \leq p < P_{i+1}} \frac{1}{(p-1)^2
}\right)^2
\end{align*}
and, for $k \geq 2$,
\begin{align*}
 \E_{r \in R_{i-1}^* \cap R_i} S_k(r)  &\leq \beta_2^2(i)\prod_{P_i \leq p <
P_{i+1}}\left(1 + \frac{1}{p -1}\right)^2
\\ & \qquad \times \left[k\left(\sum_{P_i \leq p <
P_{i+1}} \frac{1}{(p-1)^2}\right)^k\left(1 + \frac{1}{k} \sum_{P_i \leq p <
P_{i+1}} \frac{1}{(p-1)^2}\right)\right]
\end{align*}

\end{lemma}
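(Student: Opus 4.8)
The four estimates all have the same shape and follow from a single application of Lemma~\ref{convexity_lemma} together with elementary Euler-product manipulations; the plan is to isolate the common building block and then specialize. \emph{Building block.} Fix a squarefree $d>1$ all of whose prime factors lie in $[P_i,P_{i+1})$, and set $G_d(r)=\sum_{n\in\cN_{i+1},\,d\mid n}\frac{|\ba_{n,r}\bmod n|}{n}$. This is of the form $\sum_{n\in\cN_{i+1}}w_n|\ba_{n,r}\bmod n|$ with the non-negative weights $w_n=\frac1n\one[d\mid n]$, so for $\kappa\in\{2,3\}$ Lemma~\ref{convexity_lemma} gives $\E_{r\in R_{i-1}^*\cap R_i}G_d(r)^\kappa\leq\beta_\kappa^\kappa(i)\big(\sum_{n\in\cN_{i+1},\,d\mid n}\frac1n\big)^\kappa$. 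Every $n\in\cN_{i+1}$ factors as $\prod_{P_i\leq p<P_{i+1}}p^{e_p}$ with $e_p\geq1$ forced when $p\mid d$; enlarging each geometric series to be infinite (so $\sum_{e\geq1}p^{-e}=\frac1{p-1}$ and $\sum_{e\geq0}p^{-e}=1+\frac1{p-1}$) and then dropping the constraint $p\nmid d$ on the remaining primes yields
\[
\sum_{n\in\cN_{i+1},\,d\mid n}\frac1n\leq\Big(\prod_{p\mid d}\frac1{p-1}\Big)\prod_{P_i\leq p<P_{i+1}}\Big(1+\frac1{p-1}\Big),
\]
hence $\E_rG_d(r)^\kappa\leq\beta_\kappa^\kappa(i)\big(\prod_{p\mid d}\frac1{(p-1)^\kappa}\big)\prod_{P_i\leq p<P_{i+1}}\big(1+\frac1{p-1}\big)^\kappa$.

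\emph{Specialization.} I would then apply this with the relevant $d$ and $\kappa$ and sum over the prime indices, using repeatedly that a sum over \emph{distinct} prime tuples of a product of univariate factors is bounded by the corresponding power of the single sum. Taking $d=p$ with $\kappa=2$ in $\|\uG(\uO,r)\|_{\ell^2}^2=\sum_pG_p(r)^2$, and with $\kappa=3$ in $\|\uG(\uO,r)\|_{\ell^3}^3=\sum_pG_p(r)^3$, and summing over $p\in[P_i,P_{i+1})$ produces the factors $\sum_p\frac1{(p-1)^2}$, resp.\ $\sum_p\frac1{(p-1)^3}$. For $S_1(r)=\sum_{p\neq p_1}G_{pp_1}(r)^2$ take $d=pp_1$, $\kappa=2$, and bound $\sum_{p\neq p_1}\frac1{(p-1)^2(p_1-1)^2}\leq\big(\sum_p\frac1{(p-1)^2}\big)^2$. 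For $S_k(r)$ with $k\geq2$, the ``diagonal'' term $k\sum_{p_1,\dots,p_k\text{ distinct}}G_{p_1\cdots p_k}(r)^2$ contributes $k\beta_2^2(i)\prod_{P_i\leq p<P_{i+1}}(1+\frac1{p-1})^2\big(\sum_p\frac1{(p-1)^2}\big)^k$, while $\sum_{p_1,\dots,p_k\text{ distinct}}\ \sum_{p\notin\{p_1,\dots,p_k\}}G_{pp_1\cdots p_k}(r)^2$ contributes $\beta_2^2(i)\prod_{P_i\leq p<P_{i+1}}(1+\frac1{p-1})^2\big(\sum_p\frac1{(p-1)^2}\big)^{k+1}$; writing $X=\sum_{P_i\leq p<P_{i+1}}\frac1{(p-1)^2}$ and using the identity $kX^k+X^{k+1}=kX^k(1+\tfrac Xk)$ gives exactly the stated bound.

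\emph{Main obstacle.} There is no real obstacle here — all the analytic content sits in Lemma~\ref{convexity_lemma}, and what remains is bookkeeping: keeping the distinctness constraints consistent when passing to factored upper bounds, combining the weight-$k$ diagonal term with the off-diagonal term in $S_k$ and spotting the identity $kX^k+X^{k+1}=kX^k(1+X/k)$, and remembering that elements of $\cN_{i+1}$ may carry higher prime powers (which is why $\frac1{p-1}$ rather than $\frac1p$ appears throughout, and why extending the Euler product over all primes of $[P_i,P_{i+1})$ costs only a factor already present in the statement).
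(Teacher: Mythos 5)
Your proof is correct and follows exactly the route the paper indicates in its one-line argument: apply Lemma~\ref{convexity_lemma} with weights $w_n=\tfrac1n\,\mathbf{1}[d\mid n]$, then bound $\sum_{n\in\cN_{i+1},\,d\mid n}\tfrac1n$ by the Euler-product estimate $\bigl(\prod_{p\mid d}\tfrac1{p-1}\bigr)\prod_{P_i\leq p<P_{i+1}}\bigl(1+\tfrac1{p-1}\bigr)$, and sum over the relevant prime tuples. You have merely supplied in full the ``follows directly'' bookkeeping (including the harmless $kX^k+X^{k+1}=kX^k(1+X/k)$ rewriting for $S_k$), so there is nothing substantive to compare.
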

\begin{proof}
 These follow directly from the convexity lemma, Lemma \ref{convexity_lemma},
and the bound, for distinct $P_i \leq p_1, ..., p_k < P_{i+1}$,
\[
 \sum_{\substack{n \in \cN_{i+1}\\ p_1 ...p_k|n}} \frac{1}{n} <
\frac{1}{(p_1-1)...(p_k-1)} \sum_{n \in \cN_{i+1}} \frac{1}{n} <
\frac{1}{(p_1-1)...(p_k-1)} \prod_{P_i \leq p < P_{i+1}} \left(1 +
\frac{1}{p-1}\right).
\]

\end{proof}

Inserting (\ref{B_op_Holder}) in the last lemma, we conclude the following
bound.
\begin{lemma}\label{B_op_averaged_lemma}
 Let $B_{\op}$ be the constant from Theorem \ref{finding_lovasz_theorem}.
Averaged over $R_{i-1}^* \cap R_i$, we have the bound
\begin{align*}
\E_{r \in R_{i-1}^* \cap R_i} B_{\op}(M)^2& \leq \cC_i  \beta_2^2(i)
\prod_{P_i \leq p < P_{i+1}} \left(1 + \frac{1}{p-1}\right)^2
\\& \times\Biggl(P_i \log P_i\Biggl(\sum_{P_i \leq p < P_{i+1}}
\frac{1}{(p-1)^2}
\Biggr)^{2}  \\ & \qquad+ \Biggl(1 + \frac{1}{P_i}\Biggr) \sum_{n=2}^\infty
\frac{n^{\frac{1}{2}} M^{n-1} (P_i\log P_i)^{\frac{n}{2}}}{(n-1)!}
\Biggl(\sum_{P_i \leq p < P_{i+1}}
\frac{1}{(p-1)^2}\Biggr)^n\Biggr)
\end{align*}
with $\cC_i$ given as above by
\[
 \cC_i = \left(\frac{1}{P_i \log P_i} + \sum_{n=2}^\infty \frac{n^{\frac{1}{2}}
M^{n-1}}{(n-1)!(P_i \log P_i)^{\frac{n}{2}}}
\right).
\]

\end{lemma}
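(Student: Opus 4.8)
The plan is to feed the already-established pointwise estimate \eqref{B_op_Holder} into Lemma~\ref{expectation_lemma}. First I would apply \eqref{B_op_Holder} on each fiber $r \in R_{i-1}^* \cap R_i$ with the parameter specialized to $P = P_i$ (the primes dividing moduli in $\cN_{i+1}$ all exceed $P_i$), obtaining
\[
B_{\op}(M,r)^2 \;\le\; \cC_i \cdot \cS(r), \qquad \cS(r) = (P_i \log P_i)\, S_1(r) + \sum_{k=2}^\infty \frac{M^{k-1}(P_i\log P_i)^{k/2}}{k^{1/2}(k-1)!}\, S_k(r),
\]
where $\cC_i$ is exactly the deterministic constant displayed in the statement. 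Since $\cC_i$ does not depend on $r$ and every $S_k(r)\ge 0$, I would then take the expectation $\E_{r \in R_{i-1}^*\cap R_i}$, using monotone convergence to interchange it with the series, so that $\E B_{\op}(M)^2 \le \cC_i\bigl( (P_i\log P_i)\,\E S_1 + \sum_{k\ge2} \frac{M^{k-1}(P_i\log P_i)^{k/2}}{k^{1/2}(k-1)!}\,\E S_k\bigr)$.

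Next I would substitute the bounds of Lemma~\ref{expectation_lemma}. Abbreviating $\Pi_i = \prod_{P_i \le p < P_{i+1}}(1+\tfrac1{p-1})^2$ and $\sigma_i = \sum_{P_i \le p < P_{i+1}}\tfrac1{(p-1)^2}$, these read $\E S_1 \le \beta_2^2(i)\,\Pi_i\,\sigma_i^2$ and, for $k \ge 2$, $\E S_k \le \beta_2^2(i)\,\Pi_i\, k\,\sigma_i^k\bigl(1 + \tfrac1k\sigma_i\bigr)$. The factor $\beta_2^2(i)\,\Pi_i$ then pulls out of the whole bound: the $S_1$ term becomes $\beta_2^2(i)\,\Pi_i\,(P_i\log P_i)\,\sigma_i^2$, which is precisely the first term inside the large parenthesis of the claim. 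In each $S_k$ term with $k\ge2$ the factor $k$ from Lemma~\ref{expectation_lemma} combines with the $k^{-1/2}$ already present to give $k^{1/2}$, so the tail contributes
\[
\beta_2^2(i)\,\Pi_i \sum_{k=2}^\infty \frac{k^{1/2}M^{k-1}(P_i\log P_i)^{k/2}}{(k-1)!}\,\sigma_i^k\Bigl(1 + \tfrac1k\sigma_i\Bigr),
\]
and it remains only to absorb the factor $1 + \tfrac1k\sigma_i$ into the constant $1 + \tfrac1{P_i}$ appearing in the target, uniformly over $k \ge 2$.

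That last step is the one place some genuine — though elementary — input beyond the two quoted lemmas is needed, and it is where I would be most careful. Since $k \ge 2$ it suffices to show $\sigma_i \le \tfrac{2}{P_i}$, which I would get from the crude comparison
\[
\sigma_i \;\le\; \sum_{p \ge P_i} \frac{1}{(p-1)^2} \;\le\; \frac{1}{(P_i-1)^2} + \int_{P_i}^\infty \frac{dt}{(t-1)^2} \;=\; \frac{P_i}{(P_i-1)^2},
\]
together with the observation that $\tfrac{P_i}{(P_i-1)^2} \le \tfrac{2}{P_i}$ exactly when $P_i^2 \le 2(P_i-1)^2$, i.e. $P_i \ge 2+\sqrt2$, which holds since $P_i > P_0 = 4$. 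Hence $1 + \tfrac1k\sigma_i \le 1 + \tfrac12\sigma_i \le 1 + \tfrac1{P_i}$ for all $k \ge 2$; pulling $1+\tfrac1{P_i}$ out of the tail sum gives exactly the stated inequality. The remainder is bookkeeping, so I do not expect any real obstacle beyond keeping the index and parameter conventions of \eqref{B_op_Holder} and Lemma~\ref{expectation_lemma} aligned.
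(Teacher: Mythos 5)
Your proposal is correct and follows exactly the route the paper intends: the paper's "proof" is literally the one-sentence remark "inserting \eqref{B_op_Holder} in the last lemma," and you carry that out faithfully — apply \eqref{B_op_Holder} pointwise with $P = P_i$, take expectations (legitimately, since $\cC_i$ is deterministic and each $S_k \geq 0$), substitute the bounds of Lemma \ref{expectation_lemma}, and pull the common factor $\beta_2^2(i)\,\Pi_i$ out, with the $k/k^{1/2} = k^{1/2}$ bookkeeping done correctly. The one place you supply genuine detail the paper leaves implicit is absorbing $1 + \tfrac{1}{k}\sigma_i$ into $1 + \tfrac{1}{P_i}$ uniformly for $k \ge 2$ via $\sigma_i \le 2/P_i$, and your elementary integral-comparison verification of that bound is sound (noting $P_i \ge P_1 = 222 \gg 2+\sqrt{2}$, so the required inequality holds with ample room — indeed it holds for all $i \ge 1$, which matters since the paper's sharper estimate $\sigma_i < \constSumTwoFactor/(P_i\log P_i)$ is only stated for $i \ge 2$). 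No gaps.
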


We conclude this section with a brief discussion of how we apply Theorem
\ref{LLL_type_theorem}. Beyond demonstrating that fibers above a good
set $R_i^*$ are non-empty, the information that we wish to obtain from Theorem
\ref{LLL_type_theorem} is a bound for the bias statistics $\beta_k(i+1)$ in the
next stage of iteration. Lemma \ref{bias_stat_growth_lemma} reduces this problem
to bounding the individual biases $b_n$ of $R_i^* \cap R_{i+1}$ at $n \in
\cN_{i+1}$, and Theorem \ref{LLL_type_theorem} demonstrates that this bias is
bounded by
\begin{equation}\label{b_n_eqn}
 b_n \leq \max_{r \in R_i^*} \exp\left(\sum_{p|n} x^{\fix}_p(r)\right).
\end{equation} We
 bound this quantity in terms of the number of prime factors $\omega =
\omega(n)$ of $n$.  Thinking of $\ueps = \ueps(r)$ as a small error, we have
\[
\sum_{p|n} x^{\fix}_p(r) \leq  \|\ux^{\fix}\|_{\infty, \omega} \leq
\|\ux^0\|_{\infty,
\omega} + \sqrt{\omega}\|\ueps\|_{\ell^2},
\]
where $\|\cdot\|_{\infty, k}$ denotes the norm
\[
 \|\ux\|_{\infty, \omega} = \max_{i_1 < i_2 < ... < i_\omega} (|x_{i_1}| + ... +
|x_{i_\omega}|).
\]
Since we typically  have information regarding $\|\uG(\uO)\|_{\ell^p}$ for $p =
2$ or 3 (or both) we are led to a maximization problem of the
type,
\begin{align}\label{optimization}
 \text{given:} \qquad &  0 < B_p < 1, \; 1 \leq \omega\\ \notag
 \text{maximize:} \qquad &\|\ux^0\|_{\infty, \omega}\\ \notag
 \text{subject to:} \qquad &
\|\uG(\uO)\|_{\ell_p} \leq B_p.
\end{align}
In the case $p =2$ this may be easily solved along the following lines. It is no
loss to assume that
$\uG(\uO) \in \bR_{\geq 0}^\omega$. An
application of Lagrange multipliers gives that the
coordinates of the optimum take at most 3 values,
$0 = c_1 < c_2 \leq \frac{1}{3} \leq c_3 \leq B_2$, subject to $c_2(1-c_2)^2 =
c_3(1-c_3)^2$. When there are two
non-zero values, $c_2$ is
constrained by $c_2(1-c_2)^2 \geq B_2(1-B_2)^2$, which is only possible for a
bounded number of non-zero entries.
For large $\omega$, the optimum is $\frac{\sqrt{\omega} B_2}{1-
\frac{B_2}{\sqrt{\omega}}}$,
so that the best choice for all $\omega$ is a finite check.

The case $p = 3$ is actually simpler, because, in that case also there are at
most 2 non-zero values, and they necessarily satisfy $c_2 = 1-c_3$.

\section{Explicit calculation in initial stages}\label{initial_stage_section}
In the initial stage, we appeal to the Shearer-type Theorem, Theorem
\ref{shearer_applied_theorem}, with the primes in the range $P_0 = 4 < p
<P_1 = 222$ and we verify numerically that the condition of the theorem
holds. We also calculate the bound for bias statistics
\[
 \beta_2(1) \leq 12.25, \qquad \beta_3(1) \leq 25.
\]
The method of performing these explicit computations is described in Appendix
\ref{explicit_comp_section}.  Empirically, the barrier to ruling out an odd covering using the current method is that the optimal application in the initial stage can only accommodate a few primes, so that the resulting bounds for moments do not permit the process to continue.

Let $P_2 = 4000$.

In order to choose the good set $R_1^* \subset R_1 (= R_0^*
\cap R_1)$ we appeal to Lemmas \ref{expectation_lemma} and
\ref{B_op_averaged_lemma} to calculate, for any $C_2, C_{\op} > 0$, and
for $M = \constM$,
\begin{align*}
 &\E_{r \in R_1}\left(C_2 \|\uG(\uO,r)\|_{\ell^2}^2 + C_{\op}
B_{\op}(\constM,r)^2\right)
\\
 &\leq C_2 \beta_2^2(1) \prod_{222 \leq p < 4000} \left(1 +
\frac{1}{p-1}\right)^2
\sum_{222 \leq p < 4000} \frac{1}{(p-1)^2}\\
&  + C_{\op} \cC_1 \beta_2^2(1) \prod_{222 \leq p < 4000} \left(1 +
\frac{1}{p-1}\right)^2\\ &\qquad\qquad\times
\Biggl[ (222 \log 222) \sum_{222 \leq p < 4000} \frac{1}{(p-1)^2}
\\ & \qquad\qquad\qquad +\frac{223}{222}
\sum_{n=2}^\infty
\frac{n^{\frac{1}{2}} \constM^{n-1} (2 \cdot 222^2 \log
222)^{\frac{n}{2}}}{(n-1)!} \Biggl(\sum_{222 \leq p < 4000}
\frac{1}{(p-1)^2}\Biggr)^n \Biggr]
\end{align*}
and
\[
 \cC_1 = \frac{1}{222(\log
222)} + \sum_{n=2}^\infty \frac{n^{\frac{1}{2}}
\constM^{n-1}}{(n-1)! (222  \log 222)^{\frac{n}{2}}} .
\]
We calculate numerically that
\[
 \E_{r \in R_1}\left(C_2\|\uG(\uO,r)\|_{\ell^2}^2 + C_{\op}
B_{\op}(M,r)^2\right) <
C_{2}\cdot\constEBTwo + C_{\op} \cdot \constEBop.
\]
We choose $C_2 = \frac{\constRhoTwo}{\constEBTwo}$ and $C_{\op} =
\frac{\constRhoOp}{\constEBop}$, so that the above inequality reads
\[
 \E_{r \in R_1}\left(C_2\|\uG(\uO,r)\|_{\ell^2}^2 + C_{\op}
B_{\op}(\constM,r)^2\right) < 1.
\]

We say that $r \in R_1$ is good if
\[
 \|C_2 \uG(\uO,r)\|_{\ell^2}^2 +  C_{\op} B_{\op}(\constM,r)^2 \leq
\frac{1}{1-\constPiGood}.
\]
By Markov's inequality, $\pi_{\good}(1) \geq \constPiGood$.

Evidently, for all $r$,
\[
 \|\uG(\uO, r)\|_{\ell^2}^2 \leq \frac{1}{C_2 (1-\constPiGood)} <
\constMBTwoTwo.
\]
We save a little extra ground by conditioning on the actual size of
$\|\uG(\uO, r)\|_{\ell^2}$.  Let $K = \constSplice$ be a parameter.  For $1 \leq
j \leq K$ we say that $r \in R_1^*$ is in bin $\cB_j$ if
\[
\|\uG(\uO, r)\|_{\ell^2}^2 \in  \left(\frac{j-1}{K}, \frac{j}{K}\right] \cdot
\frac{1}{C_2(1-0.3)}.
\]
For $r \in \cB_j$ we have 
\[
B_{\op}(\constM, r)^2 \leq \frac{K-j+1}{K}
\cdot \frac{1}{C_{\op}(1-\constPiGood)}.
\]
Abusing notation, for $r \in \cB_j$ we write quantities
depending upon $r$ as depending upon $j$ instead so, we use $\uG(\uO, j)$, $B_2(j)$,
$B_{\op}(j)$, and so forth.

In each bin we update
\[
 B_\infty(j) = \|\uG(\uO,j)\|_{\ell^\infty}\leq B_2(j)
\]
and thus
\[
 B_{2,0}(j) = \|\ux^0\|_{\ell^2} \leq \frac{B_2(j)}{1-B_2(j)},
\]
and $\theta(j) \leq \frac{B_{\op}(j)}{1-B_{2}(j)}.$  We check numerically,
bin-by-bin, that for all bins,
\[
 \frac{B_{2,0}(j)}{1-\theta(j)}  < \constM  = M
\]
so that the condition of Theorem \ref{finding_lovasz_theorem} is met.  In
particular, each good fiber is non-empty.

Again, we apply Theorem
\ref{finding_lovasz_theorem} bin-by-bin so that, in each bin we obtain
a bound of
\[
\|\ueps(j)\|_{\ell^2} \leq \frac{B_{2,0}(j) \theta(j)}{1-\theta(j)}.
\]

Beginning from the information $ B_2^2(j) \leq \frac{j}{K}\constMBTwo$,
we solve the optimization problem (\ref{optimization}) for $\omega = 1, 2, 3,
...$. As we have already commented, for each $\omega$ the optimal solution has
no more than two non-zero values among the $x_p$.  When it has the two values
$c_1, c_2$ these satisfy for some positive integers $a\leq b$, $a+b \leq
\omega$,
\[a c_1^2 + bc_2^2 \leq \frac{j}{K} \constMBTwoTwo, \qquad c_1(1-c_1)^2 =
c_2(1-c_2)^2.\]  It transpires that this possibility occurs only for $\omega
\leq 4$ and when $a = 1$.  For $\omega \geq \constOmegaThreshold$, the
optimum in each bin is given by
\[
 \|\ux^0(j)\|_{\infty, \omega} \leq \omega \frac{B_2(j)}{\sqrt{\omega} -
B_2(j)}, \qquad \|\ux^{\fix}(j)\| \leq \omega \frac{B_2(j)}{\sqrt{\omega} -
B_2(j)} + \sqrt{\omega}\|\ueps(j)\|_{\ell^2}.
\]
Obviously $B_2(j) \leq B_2(K) \leq \constMBTwoTwo^{\frac{1}{2}} = \constMBTwo$,
and we find
\[
 \sup_j \|\ueps(j)\|_{\ell_2} \leq \constEpsMax.
\]
Resulting bounds for
$
 \sup_j \|\ux^{\fix}\|_{\infty,
\omega}
$
are recorded  in the following table

 \begin{center}
\begin{tabular}{|c||c|}
\hline
$\omega$  & $\|\ux^{\fix}\|_{\infty,\omega}$
\\
\hline
1 & \constFOa \\
2 & \constFOb\\
3 & \constFOc\\
4 & \constFOd\\
5 & \constFOe\\
6  & \constFOf\\
7 & \constFOg\\
8  & \constFOh\\
9  & \constFOi\\
10  & \constFOj\\
$\omega > 10$ & $\omega
\frac{\constMBTwo}{\sqrt{\omega}-\constMBTwo} +
\constEpsMax \sqrt{\omega}$ \\
\hline
 \end{tabular}
 \end{center}

We can thus update the bound for bias statistics $\beta_k(2)$ according to
Lemma \ref{bias_stat_growth_lemma}.  Write, for $k = 1, 2, 3, ...$,
\[
 \tau_k(p) = \sum_{i=1}^\infty \frac{(i+1)^k - i^k}{p^i}.
\]
for the local factor at $p$ that occurs at the $k$th bias statistic.  Then the
new bound becomes
 \[
 \beta_k^k(2) \leq \frac{\beta_k^k(1)}{\pi_{\good}(1)} \left(1 +
\sum_{j=1}^\infty \exp\left(\|\ux^{\fix}\|_{\infty, j}\right)
e_j\left(\tau_k(p): P_1 \leq p < P_2\right)\right)
 \]
where $e_j$ indicates the $j$th elementary symmetric function.  For large $j$
we use the bound $e_j(\utau) \leq \frac{e_1(\utau)^j}{j!}$.
In this way we calculate that
\[
 \beta_2(2) < \constbetaTwonew, \qquad
\beta_3(2)  < \constbetaThreenew.
\]

\section{Asymptotic estimates}
Recall that $P_2 = 4000$.
For all $i \geq 2$ we let $P_{i+1}=P_i^{1.5}$.  In
this section we use the following explicit estimates for sums and products over primes, which hold for $i \geq 2$.
\begin{align*}
 \prod_{P_i \leq p < P_{i+1}} \left(\frac{p}{p-1} \right) &< (\constProdFactor)(1.5) = \constProdBound\\
 \sum_{P_i \leq p < P_{i+1}} \frac{1}{(p-1)^2} & < \frac{\constSumTwoFactor}{P_i \log P_i}\\
 \sum_{P_i \leq p < P_{i+1}} \frac{1}{(p-1)^3} & < \frac{\constSumThreeFactor}{2P_i^2 \log P_i}.
\end{align*}
These are verified in Appendix \ref{prime_sum_appendix}.

For the remainder of the argument our
inductive assumption is, for $i
\geq 2$,
\begin{align}\label{inductive_assumption}
 \beta_2(i) &\leq \constbetaTwoNewratio\cdot(P_i \log P_i)^{\frac{1}{2}}\\
 \beta_3(i) &\leq \constbetaThreeNewratio\cdot(2P_i^2 \log
P_i)^{\frac{1}{3}}.\notag
\end{align}
Note that both of these hold at $i = 2$.

Setting $M = \constMIter$ in Theorem \ref{finding_lovasz_theorem}, we estimate
\[
 \E_{r \in R_{i-1}^* \cap R_i} (C_3\|\uG(\uO,r)\|_{\ell^3}^3 + C_2
\|\uG(\uO,r)\|_{\ell^2}^2 + C_{\op} B_{\op}(\constMIter,r)^2).
\]
Appealing to Lemma \ref{B_op_averaged_lemma} we bound the sums in  $\cC$ and
$\E\cS$, implicitly defined in (\ref{B_op_Holder}), by
\[
 \cC_i \leq \cC_2 =  \left(\frac{1}{4000 \log 4000} + \sum_{n=2}^\infty
\frac{n^{\frac{1}{2}}
M^{n-1}}{(n-1)! (4000 \log 4000)^{\frac{n}{2}}}
\right) \leq \constcCTwo,
\]
\begin{align*}
 &\E\cS_i
 \leq  (\constProdBound)^2 \beta_2^2(i) \left(\frac{(\constSumTwoFactor)^2}{P_i \log P_i} +
\left(1+\frac{1}{P_i}\right)\sum_{n=2}^\infty
\frac{n^{\frac{1}{2}}
M^{n-1}(\constSumTwoFactor)^n}{(n-1)! (P_i\log P_i)^{\frac{n}{2}}}
\right)
\\& \leq (\constProdBound)^2(\constbetaTwoNewratio)^2  \left((\constSumTwoFactor)^2 + \frac{4001}{4000}\sum_{n=2}^\infty
\frac{n^{\frac{1}{2}}
M^{n-1}(\constSumTwoFactor)^n}{(n-1)! (4000 \log 4000)^{\frac{n}{2}-1}}
\right)
\\& \leq \constcSTwo.
\end{align*}

 Combined with the
asymptotics of $\E \|\uG(\uO)\|_{\ell^p}^p$ from Lemma \ref{expectation_lemma},
\begin{align*}
  \E_{r \in R_{i-1}^* \cap R_i} \|\uG(\uO)\|_{\ell^3}^3 &\leq \prod_{P_i \leq p < P_{i+1}} \left(\frac{p}{p-1}\right)^3 \beta_3^3(i) \sum_{P_i \leq p < P_{i+1}} \frac{1}{(p-1)^3}
  \\&< (\constProdBound)^3 (\constSumThreeFactor)(\constbetaThreeNewratio)^3 < \constEBThreeThreeIter
\\ \E_{r \in R_{i-1}^* \cap R_i} \|\uG(\uO)\|_{\ell^2}^2 &\leq \prod_{P_i \leq p < P_{i+1}}\left( \frac{p}{p-1} \right)^2 \beta_2^2(i) \sum_{P_i \leq p < P_{i+1}} \frac{1}{(p-1)^2} \\& <
(\constProdBound)^2 (\constSumTwoFactor)
(\constbetaTwoNewratio)^2 < \constEBTwoTwoIter
\end{align*}
we deduce
\begin{align*}
 \E_{r \in R_{i-1}^* \cap R_i} &\left(C_3\|\uG(\uO)\|_{\ell^3}^3 + C_2
\|\uG(\uO)\|_{\ell^2}^2 + C_{\op} B_{\op}(\constMIter,r)^2\right) \\&\leq
\constEBThreeThreeIter C_3 + \constEBTwoTwoIter C_2 + \constEBopTwoIter C_{\op}.
\end{align*}
Choose $C_3 = \frac{\constrhoThreeIter}{\constEBThreeThreeIter}$, $C_2 =
\frac{\constrhoTwoIter}{\constEBTwoTwoIter}$, $C_{\op} =
\frac{\constRhoOpIter}{\constEBopTwoIter}$ so that the expectation is bounded by
1.
As before, declare $r \in R_{i-1}^* \cap R_i$ to be good if
\[
 C_3\|\uG(\uO,r)\|_{\ell^3}^3 +C_2
\|\uG(\uO,r)\|_{\ell^2}^2 + C_{\op} B_{\op}(\constMIter,r)^2 \leq
\frac{1}{1-\constPiGood}.
\]
Evidently $\pi_{\good}(i) \geq \constPiGood$, and for good $r$,
\begin{align*}\|\uG(\uO,r)\|_{\ell^\infty}^3&\leq \|\uG(\uO,r)\|_{\ell^3}^3
\leq \frac{1}{C_3(1-\constPiGood)} < \constMBThreeThreeIter, \\
\|\uG(\uO,r)\|_{\ell^2}^2 &\leq \frac{1}{C_2(1-\constPiGood)} <
\constMBTwoTwoIter,
\end{align*}
but, again, we bin to get a stronger result.

For  $K = \constSplice$ and
integers $0 < i,j$, $i+j \leq K+1$ let the bin $\cB_{i,j}$ be those $r$ for
which
\[
 \|\uG(\uO,r)\|_{\ell^3}^3 \in \left(\frac{i-1}{K} ,\frac{i}{K}
\right]\frac{1}{C_3(1-0.3)}, \qquad \|\uG(\uO,r)\|_{\ell^2}^2 \in
\left(\frac{j-1}{K} ,\frac{j}{K} \right]\frac{1}{C_2(1-0.3)}.
\]
For $r \in \cB_{i,j}$ we have
\[
 B_{\op}^2(r)\leq \frac{K-i-j+1}{K} \frac{1}{C_{\op}(1-\constPiGood)}.
\]

We proceed much as before (again replacing $r$ with $i,j$ in each argument)
updating bin-by-bin
\[
 B_{2,0}(i,j) \leq \frac{\|\uG(\uO,i,j)\|_{\ell^2}}{1-
\|\uG(\uO,i,j)\|_{\ell^3}},
\]
and
\[
 \theta(i,j) \leq \frac{B_{\op}(\constMIter,i,j)}{1- \|\uG(\uO,
i,j)\|_{\ell^3}}.
\]
We check bin-by-bin that
\[
 \frac{B_{2,0}(i,j)}{1-\theta(i,j)}< \constMIter = M
\]
so that our choice of $M = \constMIter$ in Theorem \ref{finding_lovasz_theorem}
is
valid.

In each bin we solve the optimization problem
(\ref{optimization}) with $p = 3$, and we find that for all $\omega \geq 1$ and
for all bins the optimum is
\[
 \|\ux^0(i,j)\|_{\infty, \omega} \leq \omega^{\frac{2}{3}}
\frac{\frac{i}{K}
\constMBThreeIter}{1-\frac{\frac{i}{K}\constMBThreeIter}{\omega^{\frac{1}{3}}}}
,
\]
so that we guarantee
\[
 \|\ux^{\fix}(i,j)\|_{\infty, \omega} \leq
\frac{\frac{i}{K}\constMBThreeIter
\omega^{\frac{2}{3}}}{1-\frac{\frac{i}{K}\constMBThreeIter}{\omega^{\frac{1}{3}
} } } +
\|\ueps(i,j)\|_{\ell^2}
\omega^{\frac{1}{2}}.
\]
We calculate
\[
 \max_{i,j} \|\ueps(i,j)\|_{\ell^2} \leq \constMaxEpsIter.
\]
Thus we find the following bounds.

 \begin{center}
\begin{tabular}{|c||c|}
\hline
$\omega$  & $\|\ux^{\fix}\|_{\infty,\omega}$
\\
\hline
1 & \constFOaIter \\
2 & \constFObIter\\
3 & \constFOcIter\\
4 & \constFOdIter\\
5 & \constFOeIter\\
6  & \constFOfIter\\
7 & \constFOgIter\\
8  & \constFOhIter\\
9  & \constFOiIter\\
10  & \constFOjIter\\
$\omega > 10$ & $
\frac{\constMBThreeIter \omega}{\omega^{\frac{1}{3}}-\constMBThreeIter} +
\constMaxEpsIter \sqrt{\omega}$ \\
\hline
 \end{tabular}
 \end{center}

In Appendix \ref{prime_sum_appendix} we verify that, for $i \geq 2$,
\[
 e_1(\tau_2(p)) \leq 3 \log 1.5 + \constTauTwoOffset < \constTauTwoBound, \qquad e_1(\tau_3(p)) \leq 7 \log 1.5 + \constTauThreeOffset < \constTauThreeBound.
\]
Hence,
\[
 e_j(\tau_2(p): P_i \leq p < P_{i+1}) \leq \frac{e_1(\tau_2(p))^j}{j!} <
\frac{(\constTauTwoBound)^j}{j!}, \qquad e_j(\tau_3(p)) <
\frac{(\constTauThreeBound)^j}{j!}
\]
and we find
\[
 \frac{\beta_2^2(i+1)}{\beta_2^2(i)} \leq \frac{1}{\constPiGood} \left(1 +
\sum_{\omega=1}^\infty
\exp\left(\max_{i,j} \|\ux^{\fix}(i,j)\|_{\infty, \omega}\right) \frac{(\constTauTwoBound)^\omega}{\omega!}\right) < \constbetaTwoGrowthIter
\]
and
\[
  \frac{\beta_3^3(i+1)}{\beta_3^3(i)} \leq \frac{1}{\constPiGood} \left(1 +
\sum_{\omega=1}^\infty
\exp\left(\max_{i,j}\|\ux^{\fix}(i,j)\|_{\infty, \omega}\right) \frac{(\constTauThreeBound)^\omega}{\omega!}\right) < \constbetaThreeGrowthIter.
\]
On the other hand,
\[
 \frac{P_{i+1}\log P_{i+1}}{P_i \log P_i} \geq \constloglengthiter \cdot
P_i^{\frac{1}{2}} \geq
\constloglengthiter \cdot 4000^{\frac{1}{2}} > 94
\]
and
\[
 \frac{ P_{i+1}^2 \log P_{i+1}}{P_i^2 \log P_i} \geq \constloglengthiter \cdot
P_i \geq 6000,
\]
so that (\ref{inductive_assumption}) is preserved, which completes the proof by
induction.

\newpage
\appendix

\section{Symmetric functions}\label{explicit_comp_section}

We briefly describe how we performed the calculations in the initial stage of
the argument, see Section \ref{initial_stage_section}.  There we appealed to
Theorem \ref{shearer_applied_theorem}, which is Theorem
\ref{initial_shearer_theorem}
with set $[n]$ identified with $\{p: 4 < p < 222\} = p_1 < p_2 < ...< p_n$ and
weights  $\pi_p = \frac{1}{p-1}$.  We identify square-free number
$m$ with the set of its prime factors.  The Shearer functions
\[
 \rho(p_1) > \rho(p_1p_2) > ...>\rho(p_1...p_n)
\]
are easily computed via
\[
 \rho(p_1...p_j) = \sum_{i = 0}^j X(i) e_i(\pi_{p_1}, ..., \pi_{p_j}).
\]
with the $e_i$ elementary symmetric functions (take $e_0=1$), see
\cite{SZ91}.

The bias statistics are also not difficult to bound.  Recall that $Q = \LCM(m: m
\in \cM)$ and that $Q_1$ is the part of $Q$ composed of primes less than $P_1 =
222.$  The $k$th bias statistic is
\[
 \beta_k^k(1) = \sum_{m |Q_1} \ell_k(m) \max_{b \bmod m} \frac{|R_1 \cap(b\bmod
m) \bmod Q_1|}{|R_1 \bmod Q_1|}.
\]
Let $\sqf(m) = \prod_{p \in S} p =: m_S$.  Appealing to
(\ref{shearer_progression}) of Theorem \ref{shearer_applied_theorem} we have
\[
 \frac{|R_1 \cap(b\bmod
m) \bmod Q_1|}{|R_1 \bmod Q_1|} \leq \frac{1}{m} \frac{\rho([n] \setminus
S)}{\rho([n])},
\]
so that
\begin{align}\notag
 \beta_k^k(1)& \leq \frac{1}{\rho([n])} \sum_{S \subset[n]} \rho([n]\setminus S)
\sum_{m : \sqf(m) = m_S} \frac{\ell_k(m)}{m}\\ &\leq \frac{1}{\rho([n])} \sum_{S
\subset [n]} \rho([n] \setminus S) \prod_{s\in S} \left(\sum_{j = 1}^\infty
\frac{\ell_k(p_s^j)}{p_s^j} \right).\label{sym_function_bias}
\end{align}
Recall that we define
\[
\tau_{k, i} = \tau_k(p_i) = \sum_{j=1}^\infty \frac{\ell_k(p_i^j)}{p_i^j} =
\sum_{j=1}^\infty \frac{(j+1)^k - j^k}{p_i^k} =
\left[\left(\frac{1}{x}-1\right)\left(x \frac{\partial}{\partial x}\right)^k
\frac{1}{1-x} -1\right]_{x = \frac{1}{p_i}}.
\]
Define for $i+j \leq n$ the mixed symmetric functions $f_{i,j}(\upi, \utau_k)$
by
\[
 f_{i,j}(\upi, \utau_k) = \frac{i!j!(n-i-j)!}{n!} \sum_{\sigma \in \Sym([n])}
\upi_{\sigma(1)}...\upi_{\sigma(i)} \utau_{k, \sigma(i+1)}...\utau_{k,
\sigma(i+j)},
\]
or, equivalently, by
\[
 \sum_{0 \leq i + j \leq n} f_{i,j}(\upi, \utau_k) x^{i}y^j = \prod_{i =
1}^n(1 + x\pi_i + y \tau_{k,i}).
\]
The sum of (\ref{sym_function_bias}) is a linear combination of the mixed
symmetric functions $f_{i,j}(\upi, \utau_k)$
\[
 \sum_{S
\subset [n]} \rho([n] \setminus S) \prod_{s\in S} \tau_{k,s} = \sum_{0 \leq i+j
\leq n} X(i) f_{i,j}(\upi, \utau_k),
\]
and so is rapidly computable.

\section{Explicit prime number estimates}\label{prime_sum_appendix}
In this appendix we sketch proofs for explicit bounds on well-known prime sums and products.
Recall $P_2 = 4000$ and, for $i \geq 2$, $P_{i+1}=P_i^{1.5}$.  In particular, no $P_i$ is prime.
Let $\gamma$ denote the Euler-Mascheroni constant.
Dusart \cite{D07}, Theorem 6.12 proves the following estimate.
\begin{theorem}
 For $x > 1$ we have
 \[
  e^\gamma (\log x)\left(1 - \frac{0.2}{(\log x)^2}\right) < \prod_{ p \leq x} \frac{p}{p-1}
 \]
and, for $x \geq 2973$ we have
\[
 \prod_{p \leq x} \frac{p}{p-1} < e^\gamma (\log x)\left(1 + \frac{0.2}{(\log x)^2}\right).
\]

\end{theorem}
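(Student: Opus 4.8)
This is Theorem 6.12 of Dusart \cite{D07}; the natural route, which I sketch here, is to take logarithms and reduce everything to explicit Mertens-type estimates. Writing $\Pi(x) = \prod_{p \le x}\frac{p}{p-1} = \prod_{p \le x}(1 - 1/p)^{-1}$, the plan begins with the identity
\[
 \log \Pi(x) = -\sum_{p \le x}\log\Bigl(1 - \frac1p\Bigr) = \sum_{p \le x}\frac1p + \sum_{p \le x}\sum_{k \ge 2}\frac{1}{kp^k}.
\]
Since $\sum_{k \ge 2}\frac{1}{kp^k} = -\log(1 - 1/p) - 1/p$, the classical evaluation of the Meissel--Mertens constant $M$, namely $M = \gamma + \sum_p\bigl(\log(1-1/p) + 1/p\bigr)$, gives $\sum_p\sum_{k \ge 2}\frac{1}{kp^k} = \gamma - M$. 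Combining this with Mertens' second theorem in the form $\sum_{p \le x}1/p = \log\log x + M + E(x)$, I obtain the exact identity
\[
 \Pi(x) = e^\gamma (\log x)\exp\bigl(E(x) - T(x)\bigr), \qquad T(x) := \sum_{p > x}\sum_{k \ge 2}\frac{1}{kp^k},
\]
so the whole problem reduces to showing $E(x) - T(x) \ge -0.2/(\log x)^2$ for $x > 1$, and $\exp(E(x) - T(x)) \le 1 + 0.2/(\log x)^2$ for $x \ge 2973$.

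The tail poses no difficulty: from $\sum_{k \ge 2}\frac{1}{kp^k} \le \frac{1}{2p(p-1)}$ one gets $0 \le T(x) < \frac{1}{2x}$, which is negligible against $1/(\log x)^2$. The substantive step is an explicit bound on the Mertens error $E(x)$. I would obtain it by partial summation from $\sum_{p \le x}\frac{\log p}{p}$, whose error term is governed by explicit estimates for $\theta(x) = \sum_{p \le x}\log p$: writing $\theta(t) = t + r(t)$ with $|r(t)| \le \eta(t)\,t$ for a known decreasing $\eta$, partial summation expresses $E(x)$ as a convergent integral involving $r(t)/(t^2 \log t)$ plus boundary terms, and inserting the explicit bounds for $\eta(t)$ (Rosser--Schoenfeld, Dusart) yields $|E(x)|$ comfortably below the target once $x$ is large.

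The hard part is the \emph{sharp} constant $0.2$, rather than a soft $O(1/(\log x)^2)$. This forces the use of the strongest available explicit forms of the prime number theorem for $\theta(x) - x$ --- resting on the numerical verification of the Riemann Hypothesis up to a large height for moderate $x$, and on explicit zero-free regions for larger $x$ --- together with careful bookkeeping in the passage from $e^u$ to $1 + u$. Here $e^u \ge 1 + u$ makes the lower bound automatic for all $x > 1$ (for $x < e^{\sqrt{0.2}}$ the bound is even vacuous), while the upper bound needs $u$ sufficiently small, which accounts for the cutoff $x \ge 2973$. Finally, the low range in which the asymptotic estimates are not yet sharp enough is closed by a direct numerical check: $\Pi$ is a step function with jumps only at primes, and both $e^\gamma(\log x)\bigl(1 \pm 0.2/(\log x)^2\bigr)$ are monotone increasing in $x$ on the relevant range, so it suffices to evaluate $\Pi$ at the finitely many primes there, using interval arithmetic to keep the comparison rigorous; this completes the proof.
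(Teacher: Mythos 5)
The paper does not actually prove this statement: it is quoted verbatim as Theorem 6.12 of Dusart~\cite{D07} and used as an external input, so there is no internal proof against which to compare. Your sketch is the correct route nevertheless, and it is faithful to what Dusart does. The reduction $\Pi(x) = e^{\gamma}(\log x)\exp\bigl(E(x)-T(x)\bigr)$ is exactly right: using $M = \gamma + \sum_p\bigl(\log(1-1/p)+1/p\bigr)$ and Mertens' second theorem $\sum_{p\le x}1/p = \log\log x + M + E(x)$, the telescoping to $\gamma + \log\log x + E(x) - T(x)$ is exact, and your crude tail bound $0\le T(x) < 1/(2\lfloor x\rfloor)$ does make $T$ harmless against $(\log x)^{-2}$. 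Your observation that $e^{u}\ge 1+u$ makes the lower bound easy once $|E(x)-T(x)|\le 0.2/(\log x)^2$ (and vacuous for $x<e^{\sqrt{0.2}}$), while the upper bound loses a quadratic term in the passage from $e^{u}$ to $1+u$ and hence needs the cutoff $x\ge 2973$ plus a finite numerical check, is precisely the bookkeeping that determines the constants. The one point I would tighten: $E(x)$ is not obtained in one partial summation from $\theta$; one passes $\theta \to \sum_{p\le x}(\log p)/p \to \sum_{p\le x}1/p$ in two Abel summations, with the error term eventually involving $\int r(t)\,(1+2\log t)/(t^2(\log t)^2)\,dt$ (compare Lemma~9 of Rosser--Schoenfeld~\cite{RS62}, invoked elsewhere in this appendix for exactly that integral). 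Inserting Dusart's explicit $|\theta(t)-t|\le 0.2\,t/(\log t)^2$ for $t\ge 3594641$, together with RH-verified data at low height and the direct finite check over the step points of $\Pi$, gives the claimed constant $0.2$. So your proposal is a correct high-level account of the proof the paper delegates to the literature.
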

As a consequence, we obtain
\begin{corollary}
 For $i \geq 2$ we have
 \[
  \prod_{P_i \leq p < P_{i+1}} \left(\frac{p}{p-1}\right) < (1.5)(\constProdFactor).
 \]
\end{corollary}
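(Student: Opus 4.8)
The plan is to write the block product as a quotient of two initial Mertens products and squeeze it between the two bounds of the preceding theorem of Dusart. Since no $P_i$ is prime,
\[
 \prod_{P_i \le p < P_{i+1}} \frac{p}{p-1} = \left(\prod_{p \le P_{i+1}} \frac{p}{p-1}\right)\left(\prod_{p \le P_i} \frac{p}{p-1}\right)^{-1}.
\]
For $i \ge 2$ we have $P_i \ge P_2 = 4000$ and $P_{i+1} > P_i$, so both arguments exceed $2973$; hence the upper estimate of the theorem applies at $x = P_{i+1}$ and the (unconditional, valid for $x>1$) lower estimate at $x = P_i$. The factor $e^\gamma$ cancels, leaving
\[
 \prod_{P_i \le p < P_{i+1}} \frac{p}{p-1} < \frac{\log P_{i+1}}{\log P_i}\cdot\frac{1 + 0.2/(\log P_{i+1})^2}{1 - 0.2/(\log P_i)^2}.
\]

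Next I would invoke the recursion $P_{i+1} = P_i^{1.5}$, which gives $\log P_{i+1} = \frac{3}{2}\log P_i$ \emph{exactly}; so the first factor is precisely $\frac{3}{2}$, and, writing $t = (\log P_i)^2$, the bound becomes $\frac{3}{2}\,g(\log P_i)$ with
\[
 g(\log P_i) = \frac{1 + \frac{0.2}{2.25}\,t^{-1}}{1 - 0.2\,t^{-1}} = \frac{t + 0.2/2.25}{t - 0.2}.
\]
Since $\frac{d}{dt}\frac{t+a}{t-b} = -\frac{a+b}{(t-b)^2} < 0$ for $a,b>0$, this expression is decreasing in $t$, hence decreasing in $P_i$; and $P_i$ is increasing in $i$. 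Therefore over the range $i \ge 2$ the quantity $g(\log P_i)$ is maximised at $i = 2$, i.e. at $\log P_i = \log 4000$.

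Finally I would evaluate at $P_i = 4000$: with $(\log 4000)^2 \approx 68.791$ one gets $g(\log 4000) = \frac{68.791 + 0.0889}{68.791 - 0.2} \approx 1.00421 < 1.004212 = \constProdFactor$, which yields the claimed inequality $\prod_{P_i \le p < P_{i+1}} \frac{p}{p-1} < (1.5)(\constProdFactor)$. I do not expect a real obstacle here: the proof is a cancellation followed by a one-variable monotonicity check. The only point requiring care is that the target constant sits extremely close to the true value of $g$ at $P_i = 4000$, so the concluding numerical evaluation must be carried out with a couple of guard digits rather than with crude rounding; everything else is routine.
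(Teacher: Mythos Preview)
Your proof is correct and is exactly the argument the paper intends: it states the corollary as an immediate consequence of Dusart's estimate without giving details, and the details you supply (take the quotient of the two Mertens products, cancel $e^{\gamma}$, use $\log P_{i+1}=1.5\log P_i$, and check monotonicity in $t=(\log P_i)^2$) are the natural ones. Your caveat about guard digits is well taken: the value of $g(\log 4000)$ works out to approximately $1.0042117$, so the inequality against $\constProdFactor=1.004212$ holds but with essentially no slack, and one must carry enough precision in $\ln 4000\approx 8.29405$ to see it.
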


For the sums of reciprocals of squares of primes, we have the following estimate.
\begin{proposition}
 For $i \geq 2$,
 \[
  \sum_{P_i \leq p < P_{i+1}} \frac{1}{(p-1)^2} < \frac{\constSumTwoFactor}{P_i \log P_i},
 \]
 and
 \[
  \sum_{P_i \leq p < P_{i+1}} \frac{1}{(p-1)^3} < \frac{\constSumThreeFactor}{2P_i^2 \log P_i}.
 \]

\end{proposition}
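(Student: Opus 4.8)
The plan is to reduce both estimates, by partial summation, to a sufficiently precise form of the prime number theorem.

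\textbf{Step 1: removing the shift.} Since $t \mapsto t/(t-1)$ is decreasing and $p \geq P_i \geq P_2 = 4000$ throughout the range, I would first note $\frac{1}{(p-1)^{2}} \leq \big(\tfrac{4000}{3999}\big)^{2}\frac{1}{p^{2}}$ and $\frac{1}{(p-1)^{3}} \leq \big(\tfrac{4000}{3999}\big)^{3}\frac{1}{p^{3}}$, together with $\big(\tfrac{4000}{3999}\big)^{2} < \constSumTwoFactor$ and $\big(\tfrac{4000}{3999}\big)^{3} < \constSumThreeFactor$. Since $P_{i}$ is not prime, I may also discard the positive tail $\sum_{p \geq P_{i+1}}p^{-s}$, so it suffices to prove
\[
 \sum_{p > P_i}\frac{1}{p^{2}} < \frac{1}{P_i\log P_i}, \qquad \sum_{p > P_i}\frac{1}{p^{3}} < \frac{1}{2P_i^{2}\log P_i}.
\]

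\textbf{Step 2: partial summation.} For $s \in \{2,3\}$ and $x = P_i$ (not prime), write $\sum_{p > x}p^{-s} = \int_{x}^{\infty}(t^{s}\log t)^{-1}\,d\theta(t)$ with $\theta(t) = \sum_{p \leq t}\log p$, and integrate by parts:
\[
 \sum_{p > x}\frac{1}{p^{s}} = -\frac{\theta(x)}{x^{s}\log x} + \int_{x}^{\infty}\frac{\theta(t)}{t^{s+1}\log t}\left(s + \frac{1}{\log t}\right)dt.
\]
Substituting the explicit estimate $|\theta(t) - t| \leq 0.2\,t/(\log t)^{2}$ and integrating $\int_{x}^{\infty}t^{-s}(\log t)^{-1}\,dt$ by parts once more to expose the main term, the two leading pieces $-\tfrac{1}{x^{s-1}\log x}$ and $\tfrac{s}{(s-1)x^{s-1}\log x}$ combine to the desired $\tfrac{1}{(s-1)x^{s-1}\log x}$, while what remains is the strictly negative term $-\tfrac{1}{s-1}\int_{x}^{\infty}t^{-s}(\log t)^{-2}\,dt$ together with a $\theta$-error term, each of relative size $O(1/\log x)$. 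Bounding these two contributions explicitly for $x \geq 4000$ (using $\log x > 8.29$, so that the negative $-1/\log x$-sized term dominates the positive $\theta$-error) yields both displayed inequalities, and hence, combined with Step 1, the proposition. The sum over reciprocal cubes is handled identically with $s = 3$; there the corrections are even smaller, of relative size $O(1/(2\log x))$, and the tail dropped in Step 1 is $O(1/P_i)$ smaller still.

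\textbf{Step 3: the prime estimate, and the main obstacle.} The input needed is the Dusart-type bound $|\theta(t) - t| \leq 0.2\,t/(\log t)^{2}$; I would cite this for $t$ above Dusart's explicit threshold and verify it for $t$ between $4000$ and that threshold by a direct computation of $\theta(t)$, which is a finite check. The point to watch is that this level of precision is genuinely required: the true value of $(P_i\log P_i)\sum_{P_i \leq p < P_{i+1}}p^{-2}$ equals $1 - \tfrac{1}{\log P_i} + O\!\left((\log P_i)^{-2}\right)$, which increases to $1$ from below as $i \to \infty$, so a Chebyshev-type bound of the form $\theta(t) \leq c\,t$ with $c$ a constant exceeding $1$ would fail for large $i$ — one must keep the $1 + o(1)$ accuracy of the prime number theorem, and in particular cannot afford to discard the negative secondary integral in Step 2. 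Once the sharp $\theta$-estimate is fixed, everything else is the routine arithmetic of bounding those lower-order integrals, and the stated constants $\constSumTwoFactor,\constSumThreeFactor$ leave comfortable room.
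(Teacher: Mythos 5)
Your Steps 1 and 2 match the paper's approach for $i\geq 4$: remove the shift $p\mapsto p-1$ at the cost of a factor close to $1$, drop the positive tail $p\geq P_{i+1}$, write the remaining sum as a Stieltjes integral against $\theta$ and integrate by parts, then insert Dusart's bound $|\theta(t)-t|\leq 0.2\,t/(\log t)^2$. Where you go further than the paper is in performing a second integration by parts to expose the negative secondary term $-\frac{1}{s-1}\int_x^\infty t^{-s}(\log t)^{-2}\,dt$; the paper instead bounds the remaining integral crudely via Lemma~9 of Rosser--Schoenfeld, $\int_x^\infty \frac{1+2\log y}{y^2(\log y)^2}\,dy<\frac{2}{x\log x}$, which gives $\sum_{p\geq x}(p-1)^{-2}<\frac{1}{0.99999997}\bigl(-\frac{0.99913}{x\log x}+\frac{2\cdot 1.00088}{x\log x}\bigr)\approx \frac{1.00263}{x\log x}$ — just under the target $\constSumTwoFactor=1.002631$. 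Your sharper bookkeeping, which gives roughly $\frac{1}{x\log x}\bigl(1-\tfrac{1}{\log x}+O((\log x)^{-2})\bigr)$, is valid for $i\geq 4$ and buys more slack, but it is not what the paper uses.

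The concrete gap is in Step 3 for $i=2,3$. Dusart's theorem gives $|\theta(t)-t|\leq 0.2\,t/(\log t)^2$ only for $t\geq 3594641$, and that threshold is not a technical artifact: the deficit $t-\theta(t)$ exceeds $0.2\,t/(\log t)^2$ at points below it (for instance near $t=10^6$ one has $10^6-\theta(10^6)\approx 1516$, while $0.2\cdot 10^6/(\log 10^6)^2\approx 1048$). Since $P_2=4000$ and $P_3=4000^{1.5}\approx 2.53\times 10^5$ both lie below the threshold, the boundary term $-\theta(P_i)/(P_i^{s}\log P_i)$ and the lower end of the integral use $\theta$ precisely where the bound fails, so the ``finite check'' you propose would discover a counterexample rather than verify the estimate, and the argument for $i=2,3$ does not go through as written. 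The paper avoids this entirely: it states ``one easily checks'' the two inequalities for $i=2$ and $i=3$ by directly computing $\sum_{P_i\leq p<P_{i+1}}(p-1)^{-s}$ over the primes in the interval, and reserves the partial-summation-plus-Dusart argument for $i\geq 4$, where $P_i\geq P_4=4000^{2.25}\approx 1.27\times 10^8$ exceeds Dusart's threshold. Replacing your Step 3 verification for $i=2,3$ by a direct computation of the prime sums would repair the proof.
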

\begin{proof}
We prove only the first inequality, as the second is similar.  One easily checks
\[
 \sum_{P_2 \leq p < P_3} \frac{1}{(p-1)^2} < \frac{1}{P_2 \log P_2}, \qquad \sum_{P_3 \leq p < P_4} \frac{1}{(p-1)^2} < \frac{1}{P_3 \log P_3}.
\]
For $i \geq 4$ use $\frac{0.99999997}{(p-1)^2} < \frac{1}{p^2}$ so that, for $x \geq P_4$,
\[
 0.99999997 \sum_{p \geq x}\frac{1}{(p-1)^2} < \sum_{p \geq x} \frac{1}{p^2} = -\frac{\theta(x)}{x^2 \log x} + \int_x^\infty \frac{\theta(y)}{y^3}  \frac{1 + 2 \log y}{(\log y)^2} dy.
\]
By \cite{D07}, we have the inequality $|\theta(x)-x| < 0.2 \frac{x}{(\log x)^2}$ for $x \geq 3594641$.  In particular, $0.99913 x < \theta(x) < 1.00088 x$ in this range.  Also, Lemma 9 of \cite{RS62} yields
\[
 \int_x^\infty \frac{1 + 2 \log y}{y^2 (\log y)^2} dy < \frac{2}{x \log x}.
\]
Combined, these estimates give the claim.
\end{proof}

Recall that we define $\tau_k(x) = \sum_{i=1}^\infty \frac{(i+1)^k - i^k}{x^i}$. We have
\[
 \tau_2(x) = \frac{3x - 1}{(x-1)^2}, \qquad \tau_3(x) = \frac{7x^2 - 2x + 1}{(x-1)^3}.
\]
\begin{proposition}
 For $i \geq 2$,
 \[
  \sum_{P_i \leq p < P_{i+1}} \tau_2(p) < 3 \log 1.5 + \constTauTwoOffset,
 \]
 and
 \[
  \sum_{P_i \leq p < P_{i+1}} \tau_3(p) < 7 \log 1.5 + \constTauThreeOffset.
 \]
\end{proposition}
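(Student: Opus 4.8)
The plan is to split $\tau_2(p)=\tfrac3p+\bigl(\tau_2(p)-\tfrac3p\bigr)$ and $\tau_3(p)=\tfrac7p+\bigl(\tau_3(p)-\tfrac7p\bigr)$, handle the harmonic pieces with an explicit form of Mertens' theorem, and absorb the remainders into the reciprocal-square estimate proved just above. Using the closed forms recorded before the proposition, a one-line computation gives
\[
 \tau_2(x)-\frac3x=\frac{5x-3}{x(x-1)^2}<\frac{5}{(x-1)^2},\qquad
 \tau_3(x)-\frac7x=\frac{19x^2-20x+7}{x(x-1)^3}<\frac{19x}{(x-1)^3}<\frac{19.01}{(x-1)^2},
\]
the last inequality holding for $x\ge 4000$ since then $x/(x-1)<1.001$; every prime in a range $[P_i,P_{i+1})$ with $i\ge 2$ exceeds $P_2=4000$. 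Hence, by the preceding proposition, for every $i\ge 2$,
\[
 \sum_{P_i\le p<P_{i+1}}\Bigl(\tau_2(p)-\frac3p\Bigr)<\frac{5\,\constSumTwoFactor}{P_i\log P_i},\qquad
 \sum_{P_i\le p<P_{i+1}}\Bigl(\tau_3(p)-\frac7p\Bigr)<\frac{19.01\,\constSumTwoFactor}{P_i\log P_i},
\]
and for $i\ge 4$, where $P_i\ge P_4>1.2\times 10^8$, both right-hand sides are below $10^{-7}$.

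For the harmonic piece I would use that no $P_i$ is prime, so $\sum_{P_i\le p<P_{i+1}}\tfrac1p=\sum_{p\le P_{i+1}}\tfrac1p-\sum_{p\le P_i}\tfrac1p$, together with Dusart's explicit Mertens estimate $\bigl|\sum_{p\le x}\tfrac1p-\log\log x-M\bigr|\le 0.2/(\log x)^3$ valid for $x\ge 2278382$ (\cite{D07}). The Meissel--Mertens constant $M$ cancels, and since $\log P_{i+1}=1.5\log P_i$ the main terms telescope exactly:
\[
 \log\log P_{i+1}-\log\log P_i=\log\frac{\log P_{i+1}}{\log P_i}=\log 1.5 .
\]
For $i\ge 4$ the remaining error is at most $0.2/(\log P_i)^3+0.2/(\log P_{i+1})^3\le 0.4/(2.25\log 4000)^3<10^{-4}$, and it decreases as $i$ grows; combined with the previous paragraph this yields $\sum_{P_i\le p<P_{i+1}}\tau_2(p)<3\log 1.5+\constTauTwoOffset$ and $\sum_{P_i\le p<P_{i+1}}\tau_3(p)<7\log 1.5+\constTauThreeOffset$ for all $i\ge 4$, with room to spare.

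It remains to treat $i=2$ and $i=3$, and this is the \textbf{main obstacle}: Dusart's sharp error bound only holds beyond $\approx 2.3\times 10^6$, whereas $P_2=4000$ and $P_3=4000^{1.5}\approx 2.5\times 10^5$ lie below it, and a crude $(\log x)^{-2}$ substitute at $x=4000$ would already overshoot the stated offsets. For these two values I would instead evaluate the harmonic sums directly: $\sum_{p\le P_2}\tfrac1p$ and $\sum_{p\le P_3}\tfrac1p$ range over fewer than $2.3\times 10^4$ primes, so $\sum_{P_2\le p<P_3}\tfrac1p$ is obtained by subtracting these, while $\sum_{P_3\le p<P_4}\tfrac1p$ equals $\sum_{p\le P_4}\tfrac1p$ (Dusart, since $P_4>2.3\times 10^6$) minus the finite sum $\sum_{p\le P_3}\tfrac1p$. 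Adding in this range the two remainder bounds from the first paragraph (which are below $2\times10^{-4}$ since $P_i\log P_i\ge 4000\log 4000$) then gives the claim for $i=2,3$ as well; this is a routine finite check of exactly the kind already used for the reciprocal-square sums in this appendix.
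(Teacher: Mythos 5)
Your proof follows essentially the same route as the paper: a direct finite check for $i=2,3$ and Dusart's explicit Mertens-type estimate on $\sum_{p\le x}1/p$ for $i\ge 4$, exploiting the fact that $\log\log P_{i+1}-\log\log P_i=\log 1.5$ exactly. The paper states this proof in one line, leaving the splitting $\tau_k(p)=c_k/p+O(1/(p-1)^2)$ and the absorption of the tail into the preceding reciprocal-square proposition implicit; your write-up makes those steps explicit (and uses a slightly different, tighter version of Dusart's error term than the one the paper quotes), but the argument is the same.
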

\begin{proof}
 For $i = 2, 3$ this is verified directly.  For $x \geq P_4$ this is a consequence of the following estimate of \cite{D07}, Theorem 6.11.
\end{proof}

\begin{theorem}
 There is a constant $B$, such that, for $x > 10372$,
 \[
  \left| \sum_{p \leq x} \frac{1}{p} - \log \log x - B\right| \leq \frac{1}{10 (\log x)^2} + \frac{4}{15 (\log x)^3}.
 \]

\end{theorem}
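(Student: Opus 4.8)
This statement is a verbatim restatement of \cite{D07}, Theorem 6.11, and in the argument above we invoke it as a black box; the ``proof'' is simply that reference. For the reader who wants to see the shape of the argument behind it, the natural route is two applications of Abel summation starting from an explicit bound on Chebyshev's function $\theta(x) = \sum_{p \le x} \log p$ — the very bound $|\theta(x) - x| < 0.2\, x/(\log x)^2$ (valid for $x \ge 3594641$) that we already quote from \cite{D07} in the proof of the preceding proposition.

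First I would record the explicit form of Mertens' first theorem. Partial summation of $\sum_{p \le t} \frac{\log p}{p}$ against $1/u$ gives
\[
 S(t) := \sum_{p \le t} \frac{\log p}{p} = \frac{\theta(t)}{t} + \int_2^t \frac{\theta(u)}{u^2}\, du = \log t + c + r(t),
\]
where $c = 1 - \log 2 + \int_2^\infty \frac{\theta(u)-u}{u^2}\, du$ is an absolute constant and
\[
 r(t) = \frac{\theta(t) - t}{t} - \int_t^\infty \frac{\theta(u) - u}{u^2}\, du ,
\]
so that $|r(t)| \le \frac{0.2}{(\log t)^2} + \frac{0.2}{\log t} \ll 1/\log t$ once $t$ is large, with an explicit numerical constant near $0.2$. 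I would then feed this into a second partial summation, now against $1/\log t$:
\[
 \sum_{p \le x} \frac{1}{p} = \frac{S(x)}{\log x} + \int_2^x \frac{S(t)}{t (\log t)^2}\, dt .
\]
Substituting $S(t) = \log t + c + r(t)$, the leading terms produce $\log\log x$ plus an absolute constant $B$, the two occurrences of $c/\log x$ (one from the boundary term $S(x)/\log x$, the other from $c\int_2^x t^{-1}(\log t)^{-2}\,dt$) cancel exactly, and what is left collapses to
\[
 \sum_{p \le x} \frac{1}{p} - \log\log x - B = \frac{r(x)}{\log x} - \int_x^\infty \frac{r(t)}{t (\log t)^2}\, dt ,
\]
with $B = 1 - \log\log 2 + \frac{c}{\log 2} + \int_2^\infty \frac{r(t)}{t(\log t)^2}\, dt$ (the integral converges since $r(t) \ll 1/\log t$). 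Because $r(t) \ll 1/\log t$, the right-hand side is $\ll 1/(\log x)^2$, which is the asserted order of magnitude; Lemma~9 of \cite{RS62} supplies the explicit tail estimate for the remaining integral.

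The only genuinely delicate point is the bookkeeping required to obtain the sharp constants $\frac{1}{10}$ and $\frac{4}{15}$ rather than the crude values that fall out of the triangle inequality above: one must exploit the partial cancellation between $r(x)/\log x$ and the tail integral, keep track of the secondary $1/(\log x)^3$ contribution, and treat separately the window $10372 < x \le 3594641$ that lies below the range in which the sharp $\theta$-bound is available (a finite numerical check). Since these refinements are carried out with care in \cite{D07}, we do not reproduce them and simply cite that work.
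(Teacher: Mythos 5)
You are correct: the paper offers no proof of this statement and simply cites it as Theorem 6.11 of \cite{D07}, exactly as you observe, so treating it as a black box is the ``same approach.'' Your accompanying sketch of the two-stage Abel summation from the explicit $\theta$-bound, with the exact cancellation of the $c/\log x$ terms and the remark that the sharp constants and the window $10372 < x \le 3594641$ require careful bookkeeping plus a finite check, is an accurate description of how Dusart's estimate is actually obtained.
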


\section{Theorems of Lov\'{a}sz and Shearer-type}\label{shearer_appendix}

The Lov\'{a}sz local lemma considers the following scenario.  In a probability
space $\cX$ there are events $\{A_v\}_{v \in V}$ with dependency graph $G = (V,
E)$, that is, $A_v$ is independent of the $\sigma$-algebra $\sigma(A_{w} : (v,w)
\not \in E)$. One seeks a positive lower bound for $\Prob(\bigcap_{v \in V}
\overline{A_v})$.  The local lemma guarantees that if there exist weights  $1 >
x_v \geq \Prob(A_v)$ satisfying
\[
 \forall v\in V, \qquad x_v \prod_{w: (v,w) \in E} (1-x_w) \geq \Prob(A_v)
\]
then
\[
 \Prob\left(\bigcap_{v \in V} \overline A_v\right) \geq \prod_{v \in V} (1-x_v).
\]
Shearer \cite{S85} gives an optimal  bound of the above type via the independent
set polynomial
\[
 \Xi\left(z_v: {v \in V}\right) = 1 + \sum_{n = 1}^\infty
\frac{1}{n!}\sum_{\substack{(v_1,..., v_n) \in V^n\\ \forall i \neq j, \; v_i
\sim v_j}} z_{v_1}...z_{v_n}.
\]
where $(v_i \sim v_j)$ means $v_i \neq v_j$ and $(v_i, v_j) \not \in E$.
\begin{theorem*}[Shearer's Theorem]
 Given $S \subset V$ let $\Xi_S(z_v: v \in V)$ denote $\Xi$ with arguments $z_v:
v \in S$ replaced by 0.  Subject to
 \[
 \forall S \subset V, \qquad \Xi_S(-\Prob(A_v): v \in V) \geq 0
 \]
it holds
 \[
  \Prob\left(\bigcap_{v \in V} \overline{A_v}\right) \geq \Xi(-\Prob(A_v): v \in
V).
 \]
\end{theorem*}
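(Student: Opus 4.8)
The plan is to prove Shearer's Theorem by the classical induction on $|V|$, combining the deletion recursion for the independent-set polynomial with one soft probabilistic inequality that encodes the dependency structure. Write $p_v=\Prob(A_v)$ and, for $T\subseteq V$, abbreviate
\[
 q_T \;:=\; \Xi_{V\setminus T}(-p_v:v\in V) \;=\; \sum_{\substack{I\subseteq T\\ I\text{ independent in }G}} \prod_{v\in I}(-p_v),
\]
the independent-set polynomial of the induced subgraph $G[T]$ at fugacities $-p_v$; the hypothesis is then exactly that $q_T\ge 0$ for every $T\subseteq V$, and the goal is $\Prob(\bigcap_{v\in V}\overline{A_v})\ge q_V$. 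First I would reduce to the strict case $q_T>0$ for all $T$ by a continuity argument: realise on a product extension of the space the probabilities $sp_v$ ($s<1$, by thinning each $A_v$ with an independent uniform variable, which leaves $G$ a valid dependency graph), apply the strict version, and let $s\uparrow 1$; here one uses the standard fact (see \cite{ScSo05}) that $q_T\ge 0$ for all $T$ already forces $\Xi_{V\setminus T}(-sp_v:v\in V)>0$ for all $T$ whenever $0\le s<1$.

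Granting strict positivity, the two ingredients are: (i) the deletion recursion $q_S=q_{S\setminus\{v\}}-p_v\,q_{S\setminus(\{v\}\cup N(v))}$ for any $v\in S$, where $N(v)$ is the neighbourhood of $v$ in $G$, which comes from sorting the independent subsets of $S$ by whether they contain $v$; and (ii) the inequality, valid for $v\notin S'$ because $A_v$ is independent of $\sigma(A_w:w\notin N(v))$,
\begin{align*}
 \Prob\!\left(A_v\cap\bigcap_{w\in S'}\overline{A_w}\right) &\le \Prob\!\left(A_v\cap\bigcap_{w\in S'\setminus N(v)}\overline{A_w}\right)\\
 &= p_v\,\Prob\!\left(\bigcap_{w\in S'\setminus N(v)}\overline{A_w}\right),
\end{align*}
the first step being monotonicity of $\Prob$ and the second the asserted independence, since $\bigcap_{w\in S'\setminus N(v)}\overline{A_w}\in\sigma(A_w:w\notin N(v))$.

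The core is an induction on $|S|$ establishing the \emph{relative} estimate
\[
 (\dagger_S):\qquad \Prob\!\left(\bigcap_{w\in S}\overline{A_w}\right)\ \ge\ \frac{q_S}{q_T}\,\Prob\!\left(\bigcap_{w\in T}\overline{A_w}\right)\qquad\text{for every }T\subseteq S,
\]
whose $T=\emptyset$ case reads $\Prob(\bigcap_{w\in S}\overline{A_w})\ge q_S$ and whose $S=V$ instance is the theorem. For the inductive step I would fix $v\in S$, set $S'=S\setminus\{v\}$ and $S''=S'\setminus N(v)$, and start from
\begin{align*}
 \Prob\!\left(\bigcap_{w\in S}\overline{A_w}\right) &= \Prob\!\left(\bigcap_{w\in S'}\overline{A_w}\right)-\Prob\!\left(A_v\cap\bigcap_{w\in S'}\overline{A_w}\right)\\
 &\ge \Prob\!\left(\bigcap_{w\in S'}\overline{A_w}\right)-p_v\,\Prob\!\left(\bigcap_{w\in S''}\overline{A_w}\right)
\end{align*}
by (ii). Since $S''\subseteq S'$ and $|S'|<|S|$, the inductive hypothesis $(\dagger_{S'})$ at $T=S''$ gives $\Prob(\bigcap_{w\in S''}\overline{A_w})\le\frac{q_{S''}}{q_{S'}}\Prob(\bigcap_{w\in S'}\overline{A_w})$; substituting, factoring out $\Prob(\bigcap_{w\in S'}\overline{A_w})$, and applying the deletion recursion collapses the bracketed factor to $q_S/q_{S'}$. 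Telescoping this one-step bound along a chain $S\supset S\setminus\{v_1\}\supset\cdots\supset T$ — the top link being the estimate just proved, the shorter links supplied by the inductive hypothesis — gives $(\dagger_S)$ for every $T\subseteq S$, which closes the induction; the case $S=V$, $T=\emptyset$ is the theorem. I would then deduce Theorem~\ref{initial_shearer_theorem} by specializing to the events $A_T$, bounding $\Prob(A_T)\le\prod_{t\in T}\pi_t$, matching $\rho$ with the pertinent restricted evaluations of $\Xi$, and reading \eqref{shearer_bound} and \eqref{relative_shearer_bound} off $(\dagger)$.

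The step I expect to cost the most is the \emph{set-up} of the induction, not any computation: the absolute bound $\Prob(\bigcap_{w\in S}\overline{A_w})\ge q_S$ does not induct on its own, because deleting a vertex produces the term $-p_v\Prob(\bigcap_{w\in S''}\overline{A_w})$ for which an \emph{upper} bound is needed, so one must carry the stronger relative estimate $(\dagger_S)$ and exploit $S''\subseteq S'$. A secondary but genuine nuisance is the reduction from $q_T\ge0$ to $q_T>0$, which rests on the non-obvious positivity of the independent-set polynomial at uniformly scaled-down fugacities. For the subsequent deduction of Theorem~\ref{initial_shearer_theorem}, the delicate point is that Shearer's condition quantifies over \emph{all} induced subgraphs whereas that theorem posits only the monotone chain $\rho([1])\ge\cdots\ge\rho([n])>0$, so a symmetric-function comparison in the spirit of \cite{SZ91} is needed to certify positivity of every $\Xi_S$.
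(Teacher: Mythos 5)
Your argument is the classical proof of Shearer's theorem and it is correct: the deletion recursion $q_S=q_{S\setminus\{v\}}-p_v\,q_{S\setminus(\{v\}\cup N(v))}$, the one-step inequality from the dependency-graph independence, and the induction carried in the \emph{relative} form $(\dagger_S)$ all fit together as you describe, and telescoping gives the conditional bound for arbitrary $T\subseteq S$. Note, however, that the paper does not actually prove the theorem you have proved; it quotes Shearer's theorem from \cite{S85} without proof, and instead proves a \emph{Clique Shearer Theorem}, whose condition quantifies over subsets of a clique cover $\cK$ rather than over subsets of $V$. The paper's inductive proof of the clique version has the same skeleton as yours (add one clique, bound the conditional probability of the newly exposed events by independence, appeal to the inductive relative estimate on the smaller clique set, and recognize the resulting expression as a ratio of clique partition functions), so the two arguments are close relatives, but they prove different statements and the difference is load-bearing. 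In the covering-system application, $V$ is identified with nonempty subsets of $[n]$, so vertex-Shearer's hypothesis would require checking $2^{2^n-1}$ positivity conditions; the clique version brings this down to $2^n$, and the paper then uses affine-linearity of $\rho_\theta$ in each $\pi_t$ together with the ordering $\pi_1\geq\cdots\geq\pi_n$ to reduce further to the single monotone chain $\rho([1])\geq\cdots\geq\rho([n])>0$. You flag this issue as a ``delicate point'' at the end but do not carry it out, and a direct deduction of Theorem~\ref{initial_shearer_theorem} from vertex-Shearer along the lines you sketch would still face the doubly exponential verification problem unless you in effect re-derive the clique reduction.

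One small simplification to your boundary reduction: the thinning-and-limit device is more than is needed. From the deletion recursion and the hypothesis $q_T\geq 0$ for all $T$, one sees $q_S\leq q_{S\setminus\{v\}}$ for every $v\in S$, i.e. $q$ is antitone under inclusion. Hence $q_T\geq q_V$ for every $T\subseteq V$. If $q_V=0$ the conclusion is trivial, and if $q_V>0$ then every $q_T>0$ and your strict induction applies without any approximation. (Your thinning argument does in fact work --- the discrepancy $\Prob(\bigcap\overline{B_v})-\Prob(\bigcap\overline{A_v})$ is at most $\sum_v p_v(1-s)\to 0$ --- but it leans on a positivity fact about scaled fugacities that the monotonicity observation makes unnecessary.)
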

\noindent
Although Shearer's Theorem is tight, evaluating the independent set polynomial
is difficult and so there remains interest in finding statements  of a similar
type to the local lemma, which is more easily applied.

One way to reduce the complexity of Shearer's theorem is to organize the events
into collection of cliques.  We consider the scenario in which graph $G = (V,
E)$ is covered by a collection of cliques  $\cK$, that is, $E = \bigcup_{K \in
\cK} E_K$.  For $v \in V$, let \begin{equation}\label{def_p_v}
p(v) = \{K: v \in K\}.   \end{equation}
  We make the assumption that $p(v)$ uniquely determines $v$ and we assume that
all vertices have self-loops.  What is the same, we take $V = \cP(\cK)\setminus
\{\emptyset\}$ to be the collection of all non-empty subsets of $\cK$, and, for
$S_1, S_2 \in V$, set $(S_1, S_2) \in E$ if and only if $S_1 \cap S_2 \neq
\emptyset$.
Consider vertex variables $(z_v)_{v \in V}$ and clique variables $(\theta_K)_{K
\in \cK}$.  For $v \in V$, set also $\theta_v = \prod_{K: v \in K} \theta_K$.
The clique partition function is defined to be
\[
 \Xi(\vv, \vtheta) = 1 + \sum_{n \geq 1} \frac{1}{n!} \sum_{\substack{v_1, ...,
v_n\\ \text{indep.\ in } G}} z_{v_1}...z_{v_n} \theta_{v_1}...\theta_{v_n}.
\]
Evidently $\Xi(\vv, \vtheta)$ specializes to $\Xi(\vv)$ at $\vtheta = 1$.  Using
this, we prove a clique version of Shearer's theorem.
\begin{theorem}[Clique Shearer Theorem]
 Let events $\{A_v: v \in V\}$ in probability space $\cX$ have dependency graph
$G= (V,E)$ covered by cliques $\cK$ as above.  For $S \subset \cK$ define event
$B_S = \bigcup_{v: p(v) \subset S} A_v$.  Subject to the condition
 \[
  \forall S \subset \cK, \qquad \Xi(-\Prob(A_v), 1_S) > 0
 \]
 we have for all $\emptyset \subset S \subset T \subset \cK$,
 \[
  \Prob(\overline{B_T}| \overline{B_S}) \geq \frac{\Xi(-\Prob(A_v),
1_T)}{\Xi(-\Prob(A_v), 1_S)}.
 \]
\end{theorem}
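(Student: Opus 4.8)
The plan is to strip away the $\vtheta$-bookkeeping, reducing the statement to a relative bound for induced subgraphs, and then to prove that bound by an induction that peels off one clique of $G$ at a time. First I would record that, identifying a vertex $v \in V = \cP(\cK)\setminus\{\emptyset\}$ with the subset $p(v)\subseteq\cK$ it names, the specialization $\vtheta = 1_S$ gives $\theta_v = 1_{v\subseteq S}$, so every monomial of $\Xi$ that involves a vertex not contained in $S$ vanishes; hence $\Xi(\vz,1_S) = \Xi_{V_S}(\vz)$, where $V_S = \{v\in V : v\subseteq S\}$ and $\Xi_U$ denotes the independent set polynomial of the subgraph of $G$ induced on $U$. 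Since also $\overline{B_S} = \bigcap_{v\in V_S}\overline{A_v}$, writing $q_S = \Prob(\overline{B_S})$ and $\xi_S = \Xi_{V_S}(-\Prob(A_\bullet))$ the hypothesis becomes $\xi_S > 0$ for every $S\subseteq\cK$, and the goal becomes the relative bound $q_T/q_S \geq \xi_T/\xi_S$ for all $S\subseteq T\subseteq\cK$.

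I would prove this by induction on $|T|$, and it is essential to carry the relative bound for \emph{all} pairs $S\subseteq T$ simultaneously rather than only the absolute bounds $q_T\geq\xi_T$, because the one-clique step below feeds on relative bounds for strictly smaller pairs. For $S\subsetneq T$, pick $K_0\in T\setminus S$; since both ratios telescope along $S\subseteq T\setminus\{K_0\}\subseteq T$ and the pair $(S, T\setminus\{K_0\})$ is handled by the inductive hypothesis, the induction step reduces to proving the one-clique bound $q_{S_0\cup\{K_0\}}/q_{S_0}\geq\xi_{S_0\cup\{K_0\}}/\xi_{S_0}$ for $S_0\subseteq\cK$ and $K_0\notin S_0$. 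Here the algebraic input is a clique-deletion identity: the new vertices $W = \{v'\cup\{K_0\} : v'\subseteq S_0\}$ form a clique of $G$ disjoint from $V_{S_0}$, an independent set of $G$ meets $W$ in at most one vertex, and $V_{S_0}\setminus N(v'\cup\{K_0\}) = \{w\subseteq S_0 : w\cap v' = \emptyset\} = V_{S_0\setminus v'}$, whence
\[
 \xi_{S_0\cup\{K_0\}} = \xi_{S_0} - \sum_{v'\subseteq S_0} \Prob\bigl(A_{v'\cup\{K_0\}}\bigr)\,\xi_{S_0\setminus v'}.
\]
The reason for organizing the recursion by clique deletion rather than the usual single-vertex deletion is that every polynomial occurring here is again an $\xi_{S'}$ with $S'\subseteq\cK$ — exactly the quantities the hypothesis controls — whereas deleting one vertex leaves the down-set form $V_{S'}$ and would require positivity of $\Xi$ at vertex sets the hypothesis says nothing about.

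On the probabilistic side I would write $q_{S_0\cup\{K_0\}}/q_{S_0} = 1 - \Prob\bigl(\bigcup_{v'\subseteq S_0}A_{v'\cup\{K_0\}}\bigm|\overline{B_{S_0}}\bigr)$, which is legitimate because $q_{S_0}\geq\xi_{S_0}>0$ already follows from the inductive hypothesis applied to the pair $(\emptyset, S_0)$. I would then bound the subtracted term from above, in turn, by the union bound, by monotonicity via $\overline{B_{S_0}}\subseteq\overline{B_{S_0\setminus v'}}$, and by the independence of $A_{v'\cup\{K_0\}}$ from $\sigma(A_w : w\subseteq S_0\setminus v')$ — each such $w$ is disjoint from $v'\cup\{K_0\}$ since $K_0\notin S_0$ — to obtain $\sum_{v'\subseteq S_0}\Prob(A_{v'\cup\{K_0\}})\,q_{S_0\setminus v'}/q_{S_0}$. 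Finally the inductive hypothesis applied to the pair $(S_0\setminus v', S_0)$ gives $q_{S_0\setminus v'}/q_{S_0}\leq\xi_{S_0\setminus v'}/\xi_{S_0}$, which turns the last bound into $1 - \xi_{S_0\cup\{K_0\}}/\xi_{S_0}$ by the identity above; hence $q_{S_0\cup\{K_0\}}/q_{S_0}\geq\xi_{S_0\cup\{K_0\}}/\xi_{S_0}$, completing the induction. The base case $|T|\leq 1$ is immediate.

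The main obstacle I anticipate is organizational rather than computational: one must set up exactly the right inductive statement (the relative bound for every pair, proved by clique-peeling), so that the one-clique step both closes on strictly smaller instances and never calls on $\Xi$ at a vertex set outside the family $\{V_{S'}:S'\subseteq\cK\}$. Once that is arranged, the remaining pieces — the union bound, monotonicity, the Chinese-Remainder-type independence of $A_{v'\cup\{K_0\}}$ from the events indexed by subsets disjoint from $v'\cup\{K_0\}$, and the identification $V_{S_0}\setminus N(v'\cup\{K_0\}) = V_{S_0\setminus v'}$ — are routine.
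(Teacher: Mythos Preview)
Your proposal is correct and follows essentially the same approach as the paper: induction by adding one clique at a time, bounding $\Prob(A_w \cap \overline{B_{S_0}})$ via monotonicity and independence by $\Prob(A_w)\,\Prob(\overline{B_{S_0\setminus p(w)}})$, then invoking the inductive relative bound for the smaller pair $(S_0\setminus v',\,S_0)$ and matching with the clique-deletion identity for $\Xi$. The paper's proof is terser and leaves the telescoping reduction and the identity $\Xi(\cdot,1_S)=\Xi_{V_S}$ implicit, but the argument is the same.
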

\begin{rem}
 As compared to Shearer's Theorem, the clique Shearer Theorem has the advantage
that the number of conditions which must be checked is  exponential in the
number of cliques, rather than in the number of vertices.
\end{rem}

\begin{proof}
 The proof is by induction.  Let $S \subset \cK$ and suppose the conclusion
holds for subsets $T \subset S$.  Let $K \in \cK \setminus S$. Then
 \begin{align*}
  \frac{\Prob(\overline{B_{S\cup\{K\}}})}{\Prob(\overline{B_S})} &\geq 1 -
\sum_{\substack{w: K \in p(w)\\ p(w) \subset S \cup \{K\}}} \frac{\Prob(A_w \cap
 \overline{B_S})}{\Prob(\overline{B_S})} \\
  &\geq 1- \sum_{\substack{w: K \in p(w)\\ p(w) \subset S \cup \{K\}}}
\frac{\Prob(A_w) \Prob(\overline{B_{S \setminus p(w)}})}{\Prob(\overline{B_S})}.
\\
 &\geq 1 - \sum_{\substack{w: K \in p(w)\\ p(w) \subset S \cup \{K\}}}\Prob(A_w)
\frac{\Xi(-\Prob(A_v), 1_{S \setminus p(w)})}{\Xi(-\Prob(A_v), 1_S)}\\&=
\frac{\Xi(-\Prob(A_v), 1_{S\cup \{K\}})}{\Xi(-\Prob(A_v), 1_S)}.
\end{align*}
\end{proof}

As a consequence we obtain a proof of a generalization of Theorem \ref{shearer_applied_theorem}.
\begin{theorem*}[Shearer-type theorem]
 Suppose we have a probability space and a parameter $\theta \geq 1$.  Let $[n]
= \{1, 2, ..., n\}$, and assume
that for each $1 \leq i \leq n$ there is a weight $\pi_i$ assigned,
satisfying $\frac{1}{\theta} \geq
\pi_1 \geq \pi_2 \geq ... \geq \pi_n \geq 0$.  Let the sets $\emptyset
\neq T \subset [n]$ index events $A_T$ each having probability
\[
 0 \leq \Prob(A_T) \leq \theta \prod_{t \in T} \pi_t:= \pi_T.
\]
Assume that $A_T$ is  independent of  $\sigma(\{A_S: S \subset [n], S\cap T
= \emptyset\})$, so that a valid dependency graph for the events $\{A_T:
\emptyset \neq T \subset[n]\}$ has an edge between $S \neq T$ whenever $S \cap
T \neq \emptyset$.

Define $\rho_\theta(\emptyset) =1$, and given $\emptyset \neq T \subset [n]$,
set
\[
 \rho_\theta(T) = 1 - \sum_{\emptyset \neq S_1 \subset T} \pi_{S_1} +
\sum_{\substack{
\emptyset \neq S_1 , S_2 \subset T\\ S_1 < S_2 \text{ disjoint}}}
\pi_{S_1}\pi_{S_2}- \sum_{\substack{ \emptyset \neq S_1, S_2, S_3 \subset
T\\S_1 < S_2 < S_3 \text{ disjoint}}} \pi_{S_1}\pi_{S_2}\pi_{S_3} + ....
\]
Suppose that $
 \rho_\theta([1]) \geq \rho_\theta([2]) \geq ... \geq \rho_\theta([n]) > 0.$
Then for any $\emptyset \neq T \subset [n]$,
\begin{equation*}
 \Prob\left(\bigcap_{\emptyset \neq S \subset T} A_S^c\right) \geq
\rho_\theta(T) > 0
\end{equation*}
and,  for any $ T_1 \subset T_2 \subset [n]$,
\begin{equation*}
 \frac{\Prob\left(\bigcap_{\emptyset \neq S \subset T_2}
A_S^c\right)}{\Prob\left(\bigcap_{\emptyset \neq S \subset T_1} A_S^c\right)}
\geq \frac{\rho_\theta(T_2)}{\rho_\theta(T_1)}.
\end{equation*}

\end{theorem*}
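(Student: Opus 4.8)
The plan is to exhibit the Shearer-type theorem as a specialization of the Clique Shearer Theorem proved just above. Take the covering family of cliques to be $\cK = [n]$ and the vertex set to be $V = \{T : \emptyset \neq T \subseteq [n]\}$, with $p(T) = T$; then $p(\cdot)$ is injective, and the induced graph — which joins $T \neq T'$ exactly when $T \cap T' \neq \emptyset$ — is the dependency graph declared for $\{A_T\}$. Under this identification $B_S = \bigcup_{\emptyset \neq T \subseteq S} A_T$, so that $\overline{B_T} = \bigcap_{\emptyset \neq S \subseteq T} A_S^c$ and $\overline{B_\emptyset}$ is the whole space. Evaluating the clique partition function at the vertex variables $z_v = -\Prob(A_v)$ and the clique variables $1_S$, the factor $\theta_v = \prod_{K \in p(v)} \theta_K$ is $1$ when $p(v) \subseteq S$ and $0$ otherwise, while independence in $G$ of a tuple $(v_1, \dots, v_m)$ means the sets $p(v_i)$ are distinct and pairwise disjoint; collapsing the $\tfrac{1}{m!}$ against the passage from ordered tuples to unordered families gives
\[
 \Xi(-\Prob(A_v),\, 1_S) \;=\; \sum_{m \geq 0} (-1)^m \sum_{\substack{\{T_1, \dots, T_m\}\ \text{pairwise disjoint}\\ \emptyset \neq T_i \subseteq S}} \Prob(A_{T_1}) \cdots \Prob(A_{T_m}).
\]
Comparing term by term with the definition of $\rho_\theta$, the substitution $\Prob(A_T) = \pi_T = \theta \prod_{t \in T} \pi_t$ gives $\rho_\theta(S) = \Xi(-\pi_v,\, 1_S)$ with $\pi_v := \pi_{p(v)}$. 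Thus the theorem amounts to running the Clique Shearer Theorem with the actual probabilities and then replacing $\Xi(-\Prob(A_v), 1_S)$ by the smaller quantity $\rho_\theta(S)$.

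The second step is a monotonicity principle. Writing $\Xi(-p, 1_S)$ for $\Xi$ at $z_v = -p_v$, $\theta_K = 1_S(K)$, differentiation of the series above (equivalently, a rearrangement of the deletion recursion already used to prove the Clique Shearer Theorem) shows that $\partial_{p_v} \Xi(-p, 1_S)$ equals $-\Xi(-p, 1_{S \setminus p(v)})$ when $p(v) \subseteq S$ and $0$ otherwise. Since $p(v) \neq \emptyset$, this brings in $\Xi$ only on a strictly smaller clique set, so an induction on $|S|$ gives: if $\Xi(-p, 1_S) > 0$ for all $S \subseteq [n]$, then so long as $0 \leq q_v \leq p_v$ for all $v$ one still has $\Xi(-q, 1_S) \geq \Xi(-p, 1_S) > 0$ for all $S$, and for $T_1 \subseteq T_2$ the ratio $\Xi(-q, 1_{T_2})/\Xi(-q, 1_{T_1})$ is non-increasing in each $q_v$ (the last point using, in the same induction, the submodularity that $\Xi(-q, 1_{U \cup \{K\}})/\Xi(-q, 1_U) \in [0,1]$ is non-increasing in $U$). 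Applying this with $p = (\pi_v)$, $q = (\Prob(A_v))$ and using $\Prob(A_T) \leq \pi_T$ yields $\Xi(-\Prob(A_v), 1_S) \geq \rho_\theta(S)$ for every $S$ and $\Xi(-\Prob(A_v), 1_{T_2})/\Xi(-\Prob(A_v), 1_{T_1}) \geq \rho_\theta(T_2)/\rho_\theta(T_1)$ — \emph{provided} one first knows $\rho_\theta(S) = \Xi(-\pi_v, 1_S) > 0$ for \emph{every} $S \subseteq [n]$, and not merely along the chain $[1] \subset [2] \subset \dots \subset [n]$.

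Supplying that last fact from the hypothesis $\rho_\theta([1]) \geq \dots \geq \rho_\theta([n]) > 0$ is the one place the ordering $\pi_1 \geq \pi_2 \geq \dots \geq \pi_n$ is genuinely used, and I expect it to be the main obstacle. The idea is an induction on $n$: when $n \notin S$ the set $S$ lies in $[n-1]$, the hypothesis restricts to a prefix of the chain, and the inductive hypothesis applies; when $n \in S$, say $S = S' \cup \{n\}$, one peels off the clique $n$ via the deletion recursion,
\[
 \rho_\theta(S' \cup \{n\}) \;=\; \rho_\theta(S') \;-\; \theta\, \pi_n \sum_{T \subseteq S'} \Bigl(\prod_{t \in T} \pi_t\Bigr) \rho_\theta(S' \setminus T),
\]
and compares term by term with the same identity for $\rho_\theta([\,|S'|+1\,])$. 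The leading terms satisfy $\rho_\theta(S') \geq \rho_\theta([\,|S'|\,])$ by induction, while the subtracted correction is controlled because $n$ carries the smallest available weight — $n \geq |S| = |S'| + 1$ forces $\pi_n \leq \pi_{|S'|+1}$, and the weights being decreasing lets one dominate the inner sum over $S'$ by the corresponding sum over $[\,|S'|\,]$. This is the style of weight-shuffling argument of Simpson and Zeilberger, and it yields $\rho_\theta(S) \geq \rho_\theta([\,|S|\,]) \geq \rho_\theta([n]) > 0$ for all $S \subseteq [n]$.

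With these ingredients the theorem drops out. The hypothesis $\Xi(-\Prob(A_v), 1_S) > 0$ of the Clique Shearer Theorem now holds for all $S \subseteq [n]$, so that theorem applies. Taking $S = \emptyset$ there — so $B_\emptyset = \emptyset$ and $\Xi(-\Prob(A_v), 1_\emptyset) = 1$ — gives, for any $\emptyset \neq T \subseteq [n]$,
\[
 \Prob\Bigl(\bigcap_{\emptyset \neq S \subseteq T} A_S^c\Bigr) \;=\; \Prob(\overline{B_T}) \;\geq\; \Xi(-\Prob(A_v), 1_T) \;\geq\; \rho_\theta(T) \;>\; 0,
\]
which is the first conclusion and also shows that the conditioning below is legitimate; and for $T_1 \subseteq T_2 \subseteq [n]$ the relative conclusion of the Clique Shearer Theorem gives
\[
 \frac{\Prob\bigl(\bigcap_{\emptyset \neq S \subseteq T_2} A_S^c\bigr)}{\Prob\bigl(\bigcap_{\emptyset \neq S \subseteq T_1} A_S^c\bigr)} \;=\; \Prob\bigl(\overline{B_{T_2}} \mid \overline{B_{T_1}}\bigr) \;\geq\; \frac{\Xi(-\Prob(A_v), 1_{T_2})}{\Xi(-\Prob(A_v), 1_{T_1})} \;\geq\; \frac{\rho_\theta(T_2)}{\rho_\theta(T_1)},
\]
which is the second conclusion.
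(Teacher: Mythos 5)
Your overall architecture is the right one, and it mirrors the paper's: identify the cliques with $[n]$, the vertices with non-empty subsets, invoke the Clique Shearer Theorem, and reduce everything to verifying $\rho_\theta(S) > 0$ for all $S \subseteq [n]$ starting from the chain hypothesis $\rho_\theta([1]) \geq \dots \geq \rho_\theta([n]) > 0$. You correctly flag this last reduction as the crux. But the mechanism you sketch for it does not work.

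You propose to peel off the largest clique $n = \max S$ via the deletion recursion
\[
\rho_\theta(S' \cup \{n\}) = \rho_\theta(S') - \theta\,\pi_n \sum_{T \subseteq S'} \Bigl(\prod_{t \in T}\pi_t\Bigr)\rho_\theta(S'\setminus T),
\]
and to compare with the same identity for $[\,|S'|+1\,]$, claiming that because $\pi_n$ is the smallest weight and the weights are decreasing, the inner sum over $S'$ is dominated by the corresponding sum over $[\,|S'|\,]$. That domination fails, and in fact goes the other way. The two factors in each term pull in opposite directions: replacing $S'$ by the initial segment $[\,|S'|\,]$ makes the products $\prod_{t\in T}\pi_t$ larger, but makes the factors $\rho_\theta(\cdot\setminus T)$ smaller, and the net effect is that the inner sum over $S'$ is \emph{at least} that over $[\,|S'|\,]$. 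For instance with $\theta = 1$ one computes
\[
\sum_{T\subseteq\{2,3\}}\Bigl(\prod_{t\in T}\pi_t\Bigr)\rho(\{2,3\}\setminus T) = 1 - \pi_2\pi_3 \;\geq\; 1 - \pi_1\pi_2 = \sum_{T\subseteq\{1,2\}}\Bigl(\prod_{t\in T}\pi_t\Bigr)\rho(\{1,2\}\setminus T),
\]
since $\pi_1 \geq \pi_3$. Concretely, take $\pi_1 = 0.9$, $\pi_2 = \pi_3 = \pi_4 = 0.1$: then $\pi_4(1-\pi_2\pi_3) = 0.099 > 0.091 = \pi_3(1-\pi_1\pi_2)$, so $\pi_n C_{S'} > \pi_{|S'|+1}C_{[|S'|]}$ and the two subtracted corrections compare the wrong way. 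The target inequality $\rho_\theta(\{2,3,4\}) \geq \rho_\theta(\{1,2,3\})$ still holds (their difference is $(\pi_1-\pi_4)(1-\pi_2\pi_3) \geq 0$), but only because of a cancellation between the leading terms and the corrections that your term-by-term argument cannot see.

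The paper's proof closes this gap by a different device: it notes that $\rho_\theta$, written as a linear combination $\sum_i X_\theta(i)\,e_i(\underline{\pi})$ of elementary symmetric functions, is affine in each coordinate $\pi_t$, and then runs a two-level induction that lowers one coordinate at a time, each time comparing only the two endpoint values $x_t = \pi_t$ and $x_t = 0$ (the latter reduces the arity by one and feeds into the outer induction). Affine linearity is precisely what lets one avoid the problematic comparison of the full inner sum. You would need to replace your deletion-recursion sketch with an argument of this kind (or supply a genuinely different mechanism for the cancellation) to make the proof complete.
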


\begin{proof}
 It is observed in \cite{SZ91} that $\rho_\theta(T)$ may be expressed as a
linear
combination of elementary symmetric functions in $\{\pi_t: t \in T\}$.  Indeed,
if $B(m,j)$ denotes the generalized Bell number, that is, the number of ways of
partitioning a set of size $m$ into $j$ parts then, setting $|T| = M$ and
making the convention $e_0(\underline{\pi}) = 1$,
\[
 \rho_\theta(T) = 1 + \sum_{m= 1}^M \left(\sum_{j=1}^m
(-\theta)^j B(m,j)\right)e_m(\upi) :=\sum_{i = 0}^M X_\theta(i)
e_i(\underline{\pi}),
\]
where $X_\theta$ satisfies the recurrence
\[
 X_\theta(0) = 1, \qquad \forall i \geq 1, \; X_{\theta}(i) = -\theta
\sum_{j=0}^{i-1}\binom{i-1}{j} X_\theta(j).
\]

In particular, as exploited in \cite{ScSo05}, $\rho(T)$ is affine linear in each
variable $\pi_t$.

We check that under the given conditions, $\rho_\theta(T)>0$ for any $T \subset
[n]$, which reduces this theorem to the clique Shearer Theorem.

Given vectors $\underline{x}, \underline{y} \in \bR^m$, say that $\underline{x}
\leq \underline{y}$ if $x_i \leq y_i$ for each $i$.  By induction, we show that
for any $1 \leq m \leq n$ and for $\underline{0}  \leq  \underline{x} \leq
\upi$, $\rho_\theta(\underline{x}) \geq \rho_\theta(\upi) > 0$, from which the
case for $T$
follows since the $\pi_i$ are
decreasing.

When $m = 1$, $\rho_\theta(\pi_1) = 1-\theta\pi_1 \leq 1-\theta x_1 =
\rho_\theta(x_1)$. Given $m
> 1$, assume inductively the statement for all $m'<m$.

Note that, by hypothesis, we have
$\rho_\theta(\pi_1, ..., \pi_{m-1}) \geq \rho_\theta(\pi_1, ..., \pi_m) > 0$.

We show by an inner induction that for $1 \leq j \leq m$,
\[
 \rho_\theta(x_1, ..., x_{j}, \pi_{j+1}, ..., \pi_m) \geq \rho_\theta(\pi_1,
...,
\pi_m).
\]
When $j = 1$ this holds, since $(\pi_2, ..., \pi_m)
\leq (\pi_1, ...,
\pi_{m-1})$ so that, by the  inductive assumption \[\rho_\theta(\pi_2, ...,
\pi_m)
\geq \rho_\theta(\pi_1, ..., \pi_{m-1}) \geq \rho_\theta(\pi_1, ..., \pi_m),\]
from which
\[
 \rho_\theta(x_1, \pi_2, ..., \pi_m)  \geq \rho_\theta(\pi_1, ..., \pi_m)
\]
follows by affine linearity.

Having shown \[\rho_\theta(x_1, ..., x_{j-1}, \pi_j, ..., \pi_m) \geq
\rho_\theta(\pi_1, ...,
\pi_m)\]
the case
\[
 \rho_\theta(x_1, ..., x_j, \pi_{j+1}, ..., \pi_m) \geq \rho_\theta(\pi_1, ...,
\pi_m)
\]
again follows by affine linearity from
\begin{align*}
 \rho_\theta(x_1, ..., x_{j-1}, 0, \pi_{j+1}, ..., \pi_m) &\geq
\rho_\theta(\pi_1, ...,
\pi_{j-1},0, \pi_{j+1}, ..., \pi_m) \\&\geq \rho_\theta(\pi_1, ...,
\pi_{m-1},0)\\&\geq
\rho_\theta(\pi_1, ..., \pi_m).
\end{align*}

\end{proof}

\bibliographystyle{plain}

\end{document}